\newcolumntype{x}{>{\vbox to 5ex\bgroup\vfill\centering\arraybackslash\hspace{0pt}}m{2.2cm}<{\egroup}}
\newtheorem{theorem}{Theorem}[section]
\newtheorem{lemma}[theorem]{Lemma}
\newtheorem{proposition}[theorem]{Proposition}
\newcommand{\Z}{\mathbb{Z}}
\DeclareMathOperator{\Cay}{Cay}
\DeclareMathOperator{\Aut}{Aut}
\title{Clique-partitioned graphs}
\author{
Grahame Erskine\thanks{Open University, Milton Keynes, UK}\footnotemark[1]\\ \texttt{\small grahame.erskine@open.ac.uk}
\and Terry Griggs\footnotemark[1]\\ \texttt{\small terry.griggs@open.ac.uk}
\and Jozef \v{S}ir\'a\v{n}\thanks{Open University, Milton Keynes, UK and Slovak University of Technology, Bratislava}\footnotemark[2]\\ \texttt{\small jozef.siran@open.ac.uk}
}
\date{}
\begin{document}
\maketitle
\let\thefootnote\relax\footnote{Mathematics subject classification: 05C35}
\let\thefootnote\relax\footnote{Keywords: extremal graphs, cliques, partition}

\vspace*{-4ex}
\begin{abstract}
\noindent A graph $G$ of order $nv$ where $n\geq 2$ and $v\geq 2$ is said to be \emph{weakly $(n,v)$-clique-partitioned} if its vertex set can be decomposed in a unique way into $n$ vertex-disjoint $v$-cliques. It is \emph{strongly $(n,v)$-clique-partitioned} if in addition, the only $v$-cliques of $G$ are the $n$ cliques in the decomposition. We determine the structure of such graphs which have the largest possible number of edges.
\end{abstract}

\section{Introduction and motivation}\label{sec:intro}
In this paper we introduce and solve a problem in extremal graph theory. This concerns what we call \emph{clique-partitioned} graphs, and we consider two versions of the problem. The idea for this investigation came initially from a BBC TV quiz show ``Only Connect''. The premise of the quiz is that the teams are required to discover (often obscure) connections between clues. Our starting point is the penultimate round of the quiz, where each team is presented with a $4\times 4$ ``wall'' of clues which they are required to sort into 4 sets of 4. Each of the sets of clues is linked by a common attribute, but the difficulty is increased by the inclusion of ``red herring'' partial links between clues. There is only one way to decompose the 16 clues into 4 sets of 4, and the problem for the teams is to discover this unique solution by avoiding the distractions of the partial matches. We give a mathematically-themed example problem in Figure~\ref{fig:example}.
\begin{figure}
\large\centering
\begin{tabular}{|x|x|x|x|}
\hline
$\displaystyle\frac{1}{2}(1+i\sqrt{3})$ & $-i$ & $5$ & $e^{1+i\sqrt{2}}$ \\
\hline
$13$ & $\pi$ & $\displaystyle\frac{\log 2}{\log 3}$ & $\displaystyle -\frac{1}{2}$ \\
\hline
$2^{16}+1$ & $1$ & $\displaystyle\frac{22}{7}$ & $e^{i\pi/13}$ \\
\hline
$\displaystyle\frac{355}{113}$ & $3$ & $3.14$ & $2^{\sqrt{2}}$ \\
\hline
\end{tabular}
\caption{An example quiz}
\label{fig:example}
\end{figure}

At first sight there are a number of promising links. For example, $3.14$, $22/7$ and $355/113$ look like approximations to $\pi$. But is this a correct group? If so, should we include $3$ in it? What about $\pi$ itself? A few moments spent looking at the grid will reveal other potential matches. For example $\frac{1}{2}(1+i\sqrt{3})$, $e^{i\pi/13}$ and $e^{1+i\sqrt{2}}$ are complex numbers. But should we also include $-i$ which is purely imaginary? The numbers 1, 3, 5 and 13 all appear in the Fibonacci sequence. However the crucial feature of the grid is that there is one and only one way to decompose the 16 entries into 4 groups of 4, such that each group of 4 has some obvious link. The solution to this problem is shown at Figure~\ref{fig:solution}.

We may model this situation by means of a graph. The vertices of the graph will be the 16 clues, and two vertices will have an edge between them whenever there is a plausible link between the corresponding clues. There are two versions of the problem. In the first version, there are exactly four 4-cliques in the graph and the problem is simply to find them. In the second (harder) version, there may be other 4-cliques in the graph (corresponding to sets of four clues with a plausible link between them). However, there must be only one way to decompose the vertex set of the graph into four of these cliques.

However, our aim in this paper is not just to consider the specific case discussed above, but to deal with the more general problem of graphs which can be decomposed into a set of $n$ cliques each of size $v$, where $n\geq 2$ and $v\geq 2$ are integers. We formulate the problem in the language of graph theory.

\textbf{Definitions.}
Let $n\geq 2$ and $v\geq 2$. Let $G$ be a graph of order $nv$.

Then $G$ is \emph{weakly $(n,v)$-clique-partitioned} if its vertex set can be decomposed in a unique way into $n$ vertex-disjoint $v$-cliques.

$G$ is \emph{strongly $(n,v)$-clique-partitioned} if it is weakly clique-partitioned and in addition, the only $v$-cliques in $G$ are the $n$ cliques in the decomposition.

\textbf{Question 1.}
What is the largest possible number of edges in a strongly $(n,v)$-clique-partitioned graph?

\textbf{Question 2.}
What is the largest possible number of edges in a weakly $(n,v)$-clique-partitioned graph?

For both questions we prove an upper bound on the largest possible number of edges in the graph, and construct graphs which attain the upper bounds. We also determine their automorphism groups. In the case of weakly $(n,v)$-clique-partitioned graphs, these graphs are shown to be unique and are a generalisation of a classical result of Hetyei~\cite{Hetyei1964} on graphs with a unique perfect matching. Strongly $(n,v)$-clique-partitioned graphs with the largest possible number of edges are unique for the values $n=2$ and $v=2$ or $3$, and we give enumeration results for some other values of $(n,v)$. Other links to extremal graph theory are also discussed.

At this point it is appropriate to discuss the complementary problem. Clique partitions of a graph are equivalent to partitions into independent sets of the complementary graph, i.e. colourings. Define an \emph{equitable colouring} of a graph to be a colouring of the vertices of the graph such that adjacent vertices receive different colours, and the colour classes differ in cardinality by at most one. If the colour classes all have the same cardinality we have an \emph{exactly equitable colouring} or $(n,v)$-\emph{equitable colouring} where there are $n$ colour classes each of cardinality $v$. This corresponds precisely to the concept of weakly $(n,v)$-clique-partitioned in the complementary graph. Strongly $(n,v)$-clique-partitioned graphs would then correspond to $(n,v)$-equitable colourings in the complementary graph and with the additional property that there are no other independent sets of cardinality $v$ other than the colour classes. The reader is also referred to~\cite{deWerra} and~\cite{EllCaro}, although note that in both of these papers the terminology differs from that used here.

First, in Section~\ref{sec:strong} we deal with strongly clique-partitioned graphs, and then in Section~\ref{sec:weak} with weakly clique-partitioned graphs. In the main we present the proofs in the manner in which we have stated the problem, but of course they can also be done in terms of the complementary graph. However in Propositions~\ref{prop:circulant} and~\ref{prop:scpaut} it is more appropriate to work with the complementary problem.

\begin{figure}
	\Large\centering
	\begin{tabular}{|x|x|x|x|m{2.5cm}|}
		\hline
		$\displaystyle\frac{1}{2}(1+i\sqrt{3})$ & $-i$ & $e^{i\pi/13}$ & $1$ & \normalsize Roots of unity \\
		\hline
		$3.14$ & $\displaystyle\frac{355}{113}$ & $\displaystyle\frac{22}{7}$ & $\displaystyle -\frac{1}{2}$ & \normalsize Non-integer rationals \\
		\hline
		$\pi$ & $2^{\sqrt{2}}$ & $e^{1+i\sqrt{2}}$ & $\displaystyle\frac{\log 2}{\log 3}$ & \normalsize Transcendental numbers \\
		\hline
		$13$ & $3$ & $5$ & $2^{16}+1$ & \normalsize Primes \\
		\hline
	\end{tabular}
	
	\normalsize
	\caption{Solution to example quiz}
	\label{fig:solution}
\end{figure}
\section{Strongly clique-partitioned graphs}\label{sec:strong}
We begin with a lemma which gives an upper bound on the number of edges in a strongly clique-partitioned graph.

\begin{lemma}\label{lem:p1}
Let $n\geq 2$ and $v\geq 2$. Let $G$ be a strongly $(n,v)$-clique-partitioned graph. Then the number of edges in $G$ is at most
\[\frac{n}{2}v(v-1)+\frac{nv(n-1)(v-2)}{2}.\]
\end{lemma}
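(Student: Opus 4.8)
The plan is to separate the edges of $G$ into those lying inside one of the $n$ cliques $C_1,\dots,C_n$ of the partition and those joining two distinct cliques; since the $C_i$ partition $V(G)$, every edge falls into exactly one of these classes. The within-clique edges number exactly $n\binom{v}{2}=\frac{n}{2}v(v-1)$, which is already the first term of the bound, so everything reduces to showing that the number of cross edges is at most $\frac{nv(n-1)(v-2)}{2}=\binom{n}{2}\,v(v-2)$. Since there are $\binom{n}{2}$ unordered pairs of cliques, it suffices to prove that between any two of them there are at most $v(v-2)$ edges.

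The key step is a local claim: if $i\neq j$ and $x\in C_i$, then $x$ has at most $v-2$ neighbours in $C_j$. To see this I would argue by contradiction: if $x$ had at least $v-1$ neighbours in $C_j$, pick a set $N$ of exactly $v-1$ of them; then $\{x\}\cup N$ is a set of $v$ vertices inducing a clique, because $N\subseteq C_j$ is a clique and $x$ is adjacent to every vertex of $N$. This $v$-clique cannot be any $C_k$, since it meets both $C_i$ and $C_j$ (note $N\neq\emptyset$ as $v\geq 2$) while the $C_k$ are pairwise disjoint, so no single $C_k$ contains both $x$ and a vertex of $N$. This contradicts the defining property of a strongly clique-partitioned graph, namely that its only $v$-cliques are $C_1,\dots,C_n$. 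Summing the bound $v-2$ over the $v$ vertices of $C_i$ then gives at most $v(v-2)$ edges between $C_i$ and $C_j$.

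Adding the two contributions, $G$ has at most $n\binom{v}{2}+\binom{n}{2}v(v-2)$ edges, and a short rearrangement matches this with the stated expression. The one place that needs a little care — and the main obstacle — is verifying that the $v$-clique $\{x\}\cup N$ built in the local step is genuinely new, i.e.\ distinct from all of $C_1,\dots,C_n$; this is exactly where disjointness of the partition (and $v\geq 2$) is used. It is worth remarking that uniqueness of the partition is never invoked here: under the strong hypothesis the partition is automatically the only one, so only the ``only $v$-cliques'' clause of the definition does any work.
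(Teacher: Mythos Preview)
Your proof is correct and follows essentially the same approach as the paper's: count the $n\binom{v}{2}$ within-clique edges, then bound the cross edges by observing that any vertex with $v-1$ or more neighbours in another clique would create a new $v$-clique, contradicting the strong hypothesis. Your write-up is somewhat more careful than the paper's in explicitly verifying that the constructed $v$-clique is distinct from all the $C_k$, and in noting that only the ``no extra $v$-cliques'' clause (not uniqueness) is used.
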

\begin{proof}
Each vertex $u$ of $G$ is adjacent to at most $v-2$ vertices in each clique not containing $u$, from which

$\quad \Delta(G)\leq (v-1)+(n-1)(v-2)$

and hence

$\quad |E(G)|\leq nv\left((v-1)+(n-1)(v-2)\right)/2$.
\end{proof}

We shall say that a strongly $(n,v)$-clique-partitioned graph attaining this bound is \emph{maximal}. It turns out that maximal graphs do in fact exist. To show this, we define the following graph $\Gamma(n,v)$ for any $n\geq 2$ and $v\geq 2$.

The vertex set of $\Gamma(n,v)$ is the set $\{(i,j):0\leq i\leq n-1,0\leq j\leq v-1\}$.
The adjacency rules are:
\[
(i,j)\sim(k,\ell) \iff
\begin{cases}
i=k &\text{ and } j\neq \ell\\
i<k &\text{ and } \ell-j\not\equiv 0\text{ or }1\pmod{v}\\
i>k &\text{ and } j-\ell\not\equiv 0\text{ or }1\pmod{v}\\
\end{cases}
\]

The construction is illustrated in Figure~\ref{fig:gamma} for the case $n=4,v=4$. In Figure~\ref{fig:gamma}(a) the $v$-cliques are formed by the first of the adjacency rules. The second and third rules are symmetric and define the edges between vertices in different numbered cliques; in Figure~\ref{fig:gamma}(b) we add the neighbours of vertex $(0,0)$ in clique 1 using these rules. In Figure~\ref{fig:gamma}(c) we add the neighbours in clique 1 of the remaining vertices in clique 0. In Figure~\ref{fig:gamma}(d) we add the neighbours of vertex $(0,0)$ in cliques 2 and 3, and in Figure~\ref{fig:gamma}(e) we do the same for the remaining vertices in clique 0. Finally in Figure~\ref{fig:gamma}(f) we complete the graph by adding the edges between cliques 1 and 2; cliques 1 and 3; and cliques 2 and 3.

\begin{figure}\centering
	\begin{tabular}{ccc}
		\begin{tikzpicture}[x=0.2mm,y=-0.2mm,inner sep=0.2mm,scale=0.75,thin,vertex/.style={circle,draw,minimum size=9,fill=white}]
\tiny
\node at (150,150) [vertex] (v1) {$00$};
\node at (210,150) [vertex] (v2) {$01$};
\node at (210,210) [vertex] (v3) {$02$};
\node at (150,210) [vertex] (v4) {$03$};
\node at (350,190) [vertex] (v5) {$10$};
\node at (410,190) [vertex] (v6) {$11$};
\node at (410,250) [vertex] (v7) {$12$};
\node at (350,250) [vertex] (v8) {$13$};
\node at (310,390) [vertex] (v9) {$20$};
\node at (370,390) [vertex] (v10) {$21$};
\node at (370,450) [vertex] (v11) {$22$};
\node at (310,450) [vertex] (v12) {$23$};
\node at (110,350) [vertex] (v13) {$30$};
\node at (170,350) [vertex] (v14) {$31$};
\node at (170,410) [vertex] (v15) {$32$};
\node at (110,410) [vertex] (v16) {$33$};
\path
	(v1) edge (v2)
	(v1) edge (v3)
	(v1) edge (v4)
	(v2) edge (v3)
	(v2) edge (v4)
	(v3) edge (v4)
	(v5) edge (v6)
	(v5) edge (v7)
	(v5) edge (v8)
	(v6) edge (v7)
	(v6) edge (v8)
	(v7) edge (v8)
	(v13) edge (v14)
	(v13) edge (v15)
	(v13) edge (v16)
	(v14) edge (v15)
	(v14) edge (v16)
	(v15) edge (v16)
	(v9) edge (v10)
	(v9) edge (v11)
	(v9) edge (v12)
	(v10) edge (v11)
	(v10) edge (v12)
	(v11) edge (v12)
	;
\end{tikzpicture}~~ & 
		\begin{tikzpicture}[x=0.2mm,y=-0.2mm,inner sep=0.2mm,scale=0.75,thin,vertex/.style={circle,draw,minimum size=9,fill=white}]
\tiny
\node at (150,150) [vertex] (v1) {$00$};
\node at (210,150) [vertex] (v2) {$01$};
\node at (210,210) [vertex] (v3) {$02$};
\node at (150,210) [vertex] (v4) {$03$};
\node at (350,190) [vertex] (v5) {$10$};
\node at (410,190) [vertex] (v6) {$11$};
\node at (410,250) [vertex] (v7) {$12$};
\node at (350,250) [vertex] (v8) {$13$};
\node at (310,390) [vertex] (v9) {$20$};
\node at (370,390) [vertex] (v10) {$21$};
\node at (370,450) [vertex] (v11) {$22$};
\node at (310,450) [vertex] (v12) {$23$};
\node at (110,350) [vertex] (v13) {$30$};
\node at (170,350) [vertex] (v14) {$31$};
\node at (170,410) [vertex] (v15) {$32$};
\node at (110,410) [vertex] (v16) {$33$};
\path
	(v1) edge (v7)
	(v1) edge (v8)
	(v1) edge (v2)
	(v1) edge (v3)
	(v1) edge (v4)
	(v2) edge (v3)
	(v2) edge (v4)
	(v3) edge (v4)
	(v5) edge (v6)
	(v5) edge (v7)
	(v5) edge (v8)
	(v6) edge (v7)
	(v6) edge (v8)
	(v7) edge (v8)
	(v13) edge (v14)
	(v13) edge (v15)
	(v13) edge (v16)
	(v14) edge (v15)
	(v14) edge (v16)
	(v15) edge (v16)
	(v9) edge (v10)
	(v9) edge (v11)
	(v9) edge (v12)
	(v10) edge (v11)
	(v10) edge (v12)
	(v11) edge (v12)
	;
\end{tikzpicture}~~ & 
		\begin{tikzpicture}[x=0.2mm,y=-0.2mm,inner sep=0.2mm,scale=0.75,thin,vertex/.style={circle,draw,minimum size=9,fill=white}]
\tiny
\node at (150,150) [vertex] (v1) {$00$};
\node at (210,150) [vertex] (v2) {$01$};
\node at (210,210) [vertex] (v3) {$02$};
\node at (150,210) [vertex] (v4) {$03$};
\node at (350,190) [vertex] (v5) {$10$};
\node at (410,190) [vertex] (v6) {$11$};
\node at (410,250) [vertex] (v7) {$12$};
\node at (350,250) [vertex] (v8) {$13$};
\node at (310,390) [vertex] (v9) {$20$};
\node at (370,390) [vertex] (v10) {$21$};
\node at (370,450) [vertex] (v11) {$22$};
\node at (310,450) [vertex] (v12) {$23$};
\node at (110,350) [vertex] (v13) {$30$};
\node at (170,350) [vertex] (v14) {$31$};
\node at (170,410) [vertex] (v15) {$32$};
\node at (110,410) [vertex] (v16) {$33$};
\path
	(v1) edge (v7)
	(v1) edge (v8)
	(v2) edge (v5)
	(v3) edge (v5)
	(v3) edge (v6)
	(v2) edge (v8)
	(v4) edge (v6)
	(v4) edge (v7)
	(v1) edge (v2)
	(v1) edge (v3)
	(v1) edge (v4)
	(v2) edge (v3)
	(v2) edge (v4)
	(v3) edge (v4)
	(v5) edge (v6)
	(v5) edge (v7)
	(v5) edge (v8)
	(v6) edge (v7)
	(v6) edge (v8)
	(v7) edge (v8)
	(v13) edge (v14)
	(v13) edge (v15)
	(v13) edge (v16)
	(v14) edge (v15)
	(v14) edge (v16)
	(v15) edge (v16)
	(v9) edge (v10)
	(v9) edge (v11)
	(v9) edge (v12)
	(v10) edge (v11)
	(v10) edge (v12)
	(v11) edge (v12)
	;
\end{tikzpicture}~~\\
		(a) & (b) & (c) \\
		\begin{tikzpicture}[x=0.2mm,y=-0.2mm,inner sep=0.2mm,scale=0.75,thin,vertex/.style={circle,draw,minimum size=9,fill=white}]
\tiny
\node at (150,150) [vertex] (v1) {$00$};
\node at (210,150) [vertex] (v2) {$01$};
\node at (210,210) [vertex] (v3) {$02$};
\node at (150,210) [vertex] (v4) {$03$};
\node at (350,190) [vertex] (v5) {$10$};
\node at (410,190) [vertex] (v6) {$11$};
\node at (410,250) [vertex] (v7) {$12$};
\node at (350,250) [vertex] (v8) {$13$};
\node at (310,390) [vertex] (v9) {$20$};
\node at (370,390) [vertex] (v10) {$21$};
\node at (370,450) [vertex] (v11) {$22$};
\node at (310,450) [vertex] (v12) {$23$};
\node at (110,350) [vertex] (v13) {$30$};
\node at (170,350) [vertex] (v14) {$31$};
\node at (170,410) [vertex] (v15) {$32$};
\node at (110,410) [vertex] (v16) {$33$};
\path
	(v1) edge (v7)
	(v1) edge (v8)
	(v1) edge (v11)
	(v1) edge (v12)
	(v1) edge (v15)
	(v1) edge (v16)
	(v2) edge (v5)
	(v3) edge (v5)
	(v3) edge (v6)
	(v2) edge (v8)
	(v4) edge (v6)
	(v4) edge (v7)
	(v1) edge (v2)
	(v1) edge (v3)
	(v1) edge (v4)
	(v2) edge (v3)
	(v2) edge (v4)
	(v3) edge (v4)
	(v5) edge (v6)
	(v5) edge (v7)
	(v5) edge (v8)
	(v6) edge (v7)
	(v6) edge (v8)
	(v7) edge (v8)
	(v13) edge (v14)
	(v13) edge (v15)
	(v13) edge (v16)
	(v14) edge (v15)
	(v14) edge (v16)
	(v15) edge (v16)
	(v9) edge (v10)
	(v9) edge (v11)
	(v9) edge (v12)
	(v10) edge (v11)
	(v10) edge (v12)
	(v11) edge (v12)
	;
\end{tikzpicture}~~ & 
		\begin{tikzpicture}[x=0.2mm,y=-0.2mm,inner sep=0.2mm,scale=0.75,thin,vertex/.style={circle,draw,minimum size=9,fill=white}]
\tiny
\node at (150,150) [vertex] (v1) {$00$};
\node at (210,150) [vertex] (v2) {$01$};
\node at (210,210) [vertex] (v3) {$02$};
\node at (150,210) [vertex] (v4) {$03$};
\node at (350,190) [vertex] (v5) {$10$};
\node at (410,190) [vertex] (v6) {$11$};
\node at (410,250) [vertex] (v7) {$12$};
\node at (350,250) [vertex] (v8) {$13$};
\node at (310,390) [vertex] (v9) {$20$};
\node at (370,390) [vertex] (v10) {$21$};
\node at (370,450) [vertex] (v11) {$22$};
\node at (310,450) [vertex] (v12) {$23$};
\node at (110,350) [vertex] (v13) {$30$};
\node at (170,350) [vertex] (v14) {$31$};
\node at (170,410) [vertex] (v15) {$32$};
\node at (110,410) [vertex] (v16) {$33$};
\path
	(v1) edge (v7)
	(v1) edge (v8)
	(v2) edge (v5)
	(v3) edge (v5)
	(v3) edge (v6)
	(v2) edge (v8)
	(v4) edge (v6)
	(v4) edge (v7)
	(v1) edge (v11)
	(v1) edge (v12)
	(v2) edge (v12)
	(v2) edge (v9)
	(v3) edge (v9)
	(v3) edge (v10)
	(v4) edge (v10)
	(v4) edge (v11)
	(v1) edge (v15)
	(v1) edge (v16)
	(v2) edge (v16)
	(v2) edge (v13)
	(v3) edge (v13)
	(v3) edge (v14)
	(v4) edge (v14)
	(v4) edge (v15)
	(v1) edge (v2)
	(v1) edge (v3)
	(v1) edge (v4)
	(v2) edge (v3)
	(v2) edge (v4)
	(v3) edge (v4)
	(v5) edge (v6)
	(v5) edge (v7)
	(v5) edge (v8)
	(v6) edge (v7)
	(v6) edge (v8)
	(v7) edge (v8)
	(v13) edge (v14)
	(v13) edge (v15)
	(v13) edge (v16)
	(v14) edge (v15)
	(v14) edge (v16)
	(v15) edge (v16)
	(v9) edge (v10)
	(v9) edge (v11)
	(v9) edge (v12)
	(v10) edge (v11)
	(v10) edge (v12)
	(v11) edge (v12)
	;
\end{tikzpicture}~~ & 
		\begin{tikzpicture}[x=0.2mm,y=-0.2mm,inner sep=0.2mm,scale=0.75,thin,vertex/.style={circle,draw,minimum size=9,fill=white}]
\tiny
\node at (150,150) [vertex] (v1) {$00$};
\node at (210,150) [vertex] (v2) {$01$};
\node at (210,210) [vertex] (v3) {$02$};
\node at (150,210) [vertex] (v4) {$03$};
\node at (350,190) [vertex] (v5) {$10$};
\node at (410,190) [vertex] (v6) {$11$};
\node at (410,250) [vertex] (v7) {$12$};
\node at (350,250) [vertex] (v8) {$13$};
\node at (310,390) [vertex] (v9) {$20$};
\node at (370,390) [vertex] (v10) {$21$};
\node at (370,450) [vertex] (v11) {$22$};
\node at (310,450) [vertex] (v12) {$23$};
\node at (110,350) [vertex] (v13) {$30$};
\node at (170,350) [vertex] (v14) {$31$};
\node at (170,410) [vertex] (v15) {$32$};
\node at (110,410) [vertex] (v16) {$33$};
\path
	(v1) edge (v7)
	(v1) edge (v8)
	(v2) edge (v5)
	(v3) edge (v5)
	(v3) edge (v6)
	(v2) edge (v8)
	(v4) edge (v6)
	(v4) edge (v7)
	(v5) edge (v11)
	(v5) edge (v12)
	(v6) edge (v12)
	(v6) edge (v9)
	(v7) edge (v9)
	(v7) edge (v10)
	(v8) edge (v10)
	(v8) edge (v11)
	(v1) edge (v11)
	(v1) edge (v12)
	(v2) edge (v12)
	(v2) edge (v9)
	(v3) edge (v9)
	(v3) edge (v10)
	(v4) edge (v10)
	(v4) edge (v11)
	(v1) edge (v15)
	(v1) edge (v16)
	(v2) edge (v16)
	(v2) edge (v13)
	(v3) edge (v13)
	(v3) edge (v14)
	(v4) edge (v14)
	(v4) edge (v15)
	(v5) edge (v15)
	(v5) edge (v16)
	(v6) edge (v16)
	(v6) edge (v13)
	(v7) edge (v13)
	(v7) edge (v14)
	(v8) edge (v14)
	(v8) edge (v15)
	(v9) edge (v15)
	(v9) edge (v16)
	(v10) edge (v16)
	(v10) edge (v13)
	(v11) edge (v13)
	(v11) edge (v14)
	(v12) edge (v14)
	(v12) edge (v15)
	(v1) edge (v2)
	(v1) edge (v3)
	(v1) edge (v4)
	(v2) edge (v3)
	(v2) edge (v4)
	(v3) edge (v4)
	(v5) edge (v6)
	(v5) edge (v7)
	(v5) edge (v8)
	(v6) edge (v7)
	(v6) edge (v8)
	(v7) edge (v8)
	(v13) edge (v14)
	(v13) edge (v15)
	(v13) edge (v16)
	(v14) edge (v15)
	(v14) edge (v16)
	(v15) edge (v16)
	(v9) edge (v10)
	(v9) edge (v11)
	(v9) edge (v12)
	(v10) edge (v11)
	(v10) edge (v12)
	(v11) edge (v12)
	;
\end{tikzpicture}~~\\
		(d) & (e) & (f) \\
	\end{tabular}
	\caption{Construction of the graph $\Gamma(4,4)$ by adding edges}
	\label{fig:gamma}
\end{figure}

Our aim is to show that the above graph is strongly $(n,v)$-clique-partitioned. In order to do this, we first remark that any $v$-clique in $\Gamma (n, v)$ contains precisely one vertex of the form $(i, j)$ for every $j \in \{0, 1,\ldots,v-1\}$.  Now we are in a position to prove the following theorem. 

\begin{theorem}\label{thm:p1}
Let $n\geq 2$ and $v\geq 2$. Then $\Gamma(n,v)$ has $nv\left((v-1)+(n-1)(v-2)\right)/2$ edges and is strongly $(n,v)$-clique-partitioned. Hence the upper bound of Lemma~\ref{lem:p1} is attained.
\end{theorem}

\begin{proof}
The number of edges $\Gamma(n,v)$ is immediate from the construction. Let $C$ be a $v$-clique in $\Gamma(n,v)$; then by the remark above, the vertices of $C$ are $\{(i_j,j):0\leq j\leq v-1\}$.

We proceed iteratively. Choose $x$ such that $i_x=\min\{i_j:0\leq j\leq v-1\}$. Now consider the vertices $(i_x,x)$ and $(i_{x+1},x+1)$. By the adjacency relations in $\Gamma(n,v)$ it follows that $i_x=i_{x+1}$.

Next consider the vertices $(i_{x+1},x+1)=(i_x,x+1)$ and $(i_{x+2},x+2)$. Again by the adjacency relations in $\Gamma(n,v)$ it follows that $i_x=i_{x+2}$.

The argument can now be repeated to get that all $i_j$, $j=0,1,\ldots,v-1$ are equal, which means that $C$ is one of the $v$-cliques into which the graph $\Gamma(n,v)$ can be strongly clique-partitioned.
\end{proof}

The algebraic construction of these graphs allows us to deduce some structural information about them. The graphs $\Gamma(n,v)$ are clearly regular, of order $nv$ and degree $(v-1)+(n-1)(v-2)=n(v-2)+1$, as are any strongly $(n,v)$-clique-partitioned graphs meeting the upper bound of Lemma~\ref{lem:p1}. In fact it turns out that the graphs $\Gamma(n,v)$ are Cayley graphs of a cyclic group, and hence vertex-transitive.

Recall that a \emph{Cayley graph} $\Cay(G,S)$ of a group $G$ and inverse-closed subset $S\subseteq G\setminus\{1\}$ has as vertex set the elements of $G$, and has edges from $g$ to $gs$ for every $g\in G$ and $s\in S$. A Cayley graph of a cyclic group is often called a \emph{circulant} graph.

\begin{proposition}\label{prop:circulant}
Let $n\geq 2$ and $v\geq 2$. Then $\Gamma(n,v)$ is a Cayley graph of the cyclic group of order $nv$. Its complement $\overline{\Gamma(n,v)}$ is isomorphic to $\Cay(\Z_{nv},\{\pm 1,\pm 2,\ldots,\pm(n-1)\})$.
\end{proposition}
\begin{proof}
Consider the map $\phi$ from the vertex set of $\Gamma(n,v)$ to $\Z_{nv}$ given by
\[\phi(i,j)=i-nj.\]
It is routine to show that $\phi$ is a bijection, and that the image set of the ``missing'' edges from vertex $(i,j)$ is the set $\{\phi(i,j)\pm 1,\ldots,\phi(i,j)\pm(n-1)\}$.
\end{proof}

Since $\Gamma(n,v)$ is a circulant graph of order $nv$, its automorphism group must contain a subgroup isomorphic to the dihedral group of order $2nv$. In fact our next result shows that if $v\geq 3$, this is the full automorphism group.

\begin{proposition}\label{prop:scpaut}
Let $n\geq 2$ and $v\geq 2$. Then $\Aut(\Gamma(n,v))$ is isomorphic to:
\begin{itemize}[label={~~~}]
	\item $S_2\wr S_n\cong \Z_2^n\rtimes S_n$, of order $2^n n!$, if $v=2$;
	\item the dihedral group $D_{2nv}$, of order $2nv$, if $v\geq 3$.
\end{itemize}
\end{proposition}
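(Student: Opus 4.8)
The plan is to treat the two cases of the statement separately: the case $v=2$ is immediate, while the case $v\ge 3$ carries the bulk of the argument.

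\emph{The case $v=2$.} Here the second and third lines of the adjacency rule are vacuous, since modulo $2$ every integer is $\equiv 0$ or $1$; hence $\Gamma(n,2)$ has no edges between distinct cliques and is simply a disjoint union of $n$ copies of $K_2$. Its connected components are exactly its $2$-cliques, so an automorphism may permute the $n$ edges arbitrarily and, independently, transpose or fix the two endpoints of each edge; conversely every such map is an automorphism. This gives $\Aut(\Gamma(n,2))\cong S_2\wr S_n\cong\Z_2^n\rtimes S_n$, of order $2^n n!$.

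\emph{The case $v\ge 3$.} Write $A=\Aut(\Gamma(n,v))$ and let $Q_0,\dots,Q_{n-1}$ be the partition cliques, $Q_i=\{(i,j):0\le j\le v-1\}$. By Lemma~\ref{lem:clique} and Theorem~\ref{thm:p1} the $Q_i$ are precisely the $v$-cliques of $\Gamma(n,v)$, and since they are pairwise disjoint there can be no $(v+1)$-clique (it would contain two distinct overlapping $v$-cliques), so the $Q_i$ are the maximum cliques; consequently $A$ permutes $\{Q_0,\dots,Q_{n-1}\}$. Since $\Gamma(n,v)$ is vertex-transitive and contains a regular cyclic subgroup (Proposition~\ref{prop:circulant}), we have $|A|=nv\cdot|A_0|$, where $A_0$ is the stabiliser of $(0,0)$; moreover $A$ is already known to contain a dihedral subgroup of order $2nv$. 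It therefore suffices to prove $|A_0|\le 2$, as this forces $A\cong D_{2nv}$.

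To bound $|A_0|$ I would proceed as follows. First note that for $k\ge 1$ the non-neighbours of a vertex $(k,\ell)$ inside $Q_0$ are exactly $(0,\ell-1)$ and $(0,\ell)$ (subscripts mod $v$), a pair that determines $\ell$; symmetrically the non-neighbours of $(0,j)$ outside $Q_0$ are the vertices $(k,j)$ and $(k,j+1)$ for $k\ge 1$. Using the latter, define an auxiliary graph $L$ on vertex set $Q_0$ by joining $(0,j)$ to $(0,j')$ when they have a common non-neighbour outside $Q_0$; a short computation shows $L\cong C_v$ (this is where $v\ge 3$ first enters). Every $\alpha\in A_0$ fixes $Q_0$ setwise, hence restricts to an automorphism of $L$ fixing $(0,0)$, and there are only two such, the identity and the reflection of $C_v$ through $(0,0)$. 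So it remains to show that any $\alpha\in A_0$ acting trivially on $Q_0$ is the identity. Such an $\alpha$ permutes $Q_1,\dots,Q_{n-1}$, and the non-neighbour description forces it to preserve vertex numbers: $\alpha(m,\ell)=(\pi(m),\ell)$ for some permutation $\pi$ of $\{0,\dots,n-1\}$ fixing $0$. Finally, examining the adjacency of $(m,\ell)$ with $(m',\ell')$ for $m<m'$ shows that $\pi$ must be order-preserving, since otherwise one would need $\{0,1\}\equiv\{0,-1\}\pmod v$, impossible for $v\ge 3$; hence $\pi=\mathrm{id}$ and $\alpha=\mathrm{id}$.

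I expect the crux to be this last step, the passage from ``$\alpha$ preserves vertex numbers'' to ``$\alpha$ preserves the linear order of the cliques, hence is trivial''. Everything else — identifying $\Gamma(n,2)$, the clique bookkeeping, and the identification $L\cong C_v$ — is routine. It is worth flagging that the hypothesis $v\ge 3$ is used in two places, the structure of $L$ and the rigidity of the clique order, and that both genuinely fail at $v=2$, which is precisely why the answer there is the larger group $S_2\wr S_n$.
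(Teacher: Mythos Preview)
Your proof is correct and follows essentially the same approach as the paper's: reduce to showing the stabiliser of $(0,0)$ has order~$2$ via the non-neighbour structure, then use the fact that adjacency between cliques detects their linear order (for $v\ge 3$) to force the clique permutation to be the identity. The only difference is organisational --- you package the two possible restrictions to $Q_0$ via the auxiliary cycle $L\cong C_v$, whereas the paper does a direct two-case split on whether $(i,0)^\phi=(i',0)$ or $(i,0)^\phi=(i',1)$ --- but the underlying computations are the same.
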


\begin{proof}
If $v=2$, then $\Gamma(n,v)$ consists of a set of $n$ vertex-disjoint edges, and the result follows immediately. 

So suppose $v\geq 3$ and let $G=\Cay(\Z_{nv},\{\pm 1,\pm 2,\ldots,\pm(n-1)\})$. Since $\overline{\Gamma(n,v)}\cong G$ we have $\Aut(\Gamma(n,v))\cong\Aut(G)$. Since $G$ is a circulant graph we know $D_{2nv}\leq\Aut(G)$ and we establish the reverse inclusion. Let $H$ be the graph with vertex set $V(H)=\Z_{nv}$ and where vertices $u,v$ are adjacent in $H$ if and only they are adjacent in $G$ and the intersection of their $G$-neighbourhoods has cardinality exactly $2n-4$. Then any automorphism of $G$ preserves $H$, so that $\Aut(G)\leq\Aut(H)$. It is straightforward to see that the edges of $H$ are precisely those edges of $G$ corresponding to the generators $\{\pm 1\}$, and so $H$ is the cyclic graph $C_{nv}$. Thus $\Aut(H)=D_{2nv}$ and the proof is complete.
\end{proof}

We now deal with some specific cases, beginning with $n=2$.

\begin{theorem}\label{thm:n2}
The graph $\Gamma(2,v)$ is the unique maximal strongly $(2,v)$-clique-partitioned graph, and is the complete graph $K_{2v}$ with a Hamiltonian cycle removed.
\end{theorem}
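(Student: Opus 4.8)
The plan is to prove the theorem in two halves. First I would describe $\Gamma(2,v)$ explicitly enough to see that it is $K_{2v}$ with a Hamiltonian cycle deleted; then I would show, conversely, that every maximal strongly $(2,v)$-clique-partitioned graph must have exactly this shape. Since Theorem~\ref{thm:p1} already tells us $\Gamma(2,v)$ is maximal, the second half yields uniqueness up to isomorphism.

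For the first half, observe that cliques $0$ and $1$ of $\Gamma(2,v)$ are complete, so every non-edge runs between them. By the adjacency rule the non-neighbours of $(0,j)$ are exactly $(1,j)$ and $(1,j+1)$, and dually the non-neighbours of $(1,j)$ are exactly $(0,j)$ and $(0,j-1)$ (indices mod $v$). Hence the non-edges of $\Gamma(2,v)$ form a $2$-regular graph on all $2v$ vertices, and tracing it from $(0,0)$ yields the single cycle
\[(0,0),\ (1,0),\ (0,v-1),\ (1,v-1),\ (0,v-2),\ \ldots,\ (0,1),\ (1,1),\ (0,0)\]
of length $2v$. Thus $\Gamma(2,v)=K_{2v}-C_{2v}$, which is the second assertion of the theorem.

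For the second half, let $G$ be any maximal strongly $(2,v)$-clique-partitioned graph with (unique) clique decomposition $\{A,B\}$. Since $|E(G)|$ attains the bound of Lemma~\ref{lem:p1}, $G$ has exactly $2v$ non-edges, and all of them lie between $A$ and $B$ because $A$ and $B$ are complete. As in the proof of Lemma~\ref{lem:p1}, each vertex has at least two non-neighbours in the opposite clique, accounting for at least $2v$ non-edges in all; as there are exactly $2v$, each vertex has exactly two, so the bipartite non-edge graph $H$ on $A\sqcup B$ is $2$-regular, hence a disjoint union of even cycles. Let $A_1,\ldots,A_k$ and $B_1,\ldots,B_k$ be the intersections of these cycles with $A$ and $B$, so $|A_c|=|B_c|$ and $\sum_c|A_c|=v$. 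The crucial step is the following: if $k\ge 2$, set $S=A_1$ and $T=B_2\cup\cdots\cup B_k$; then $|S\cup T|=v$, and since $S$ and $T$ lie in different components of $H$ there is no non-edge of $G$ between them, so $S\cup T$ is a $v$-clique of $G$ distinct from both $A$ and $B$ --- contradicting the strongly clique-partitioned hypothesis. Therefore $k=1$, $H$ is a single $2v$-cycle (necessarily alternating between $A$ and $B$, being bipartite), and $G$ is $K_{2v}$ with an alternating Hamiltonian cycle removed.

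It remains to identify $G$ with $\Gamma(2,v)$. Relabelling so that $H$ is the cycle $a_1,b_1,a_2,b_2,\ldots,a_v,b_v,a_1$ with $A=\{a_i\}$ and $B=\{b_i\}$, the non-edges of $G$ are precisely the pairs $\{a_i,b_i\}$ and $\{a_i,b_{i-1}\}$; the map $a_i\mapsto(0,-i\bmod v)$, $b_i\mapsto(1,-i\bmod v)$ sends these bijectively onto the pairs $\{(0,j),(1,j)\}$ and $\{(0,j),(1,j+1)\}$, which are exactly the non-edges of $\Gamma(2,v)$, so it is a graph isomorphism. I expect the main obstacle to be the step forcing $k=1$: one has to notice that $2$-regularity of $H$ by itself is not enough, and that it is precisely the absence of further $v$-cliques (the ``strong'' part of the hypothesis) that makes $H$ connected; the rest is routine verification.
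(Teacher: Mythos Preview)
Your proof is correct and follows essentially the same line as the paper's: both argue that the $2v$ missing edges form a $2$-regular graph, hence a union of cycles, and that more than one cycle would yield an extra $v$-clique. You are simply more explicit than the paper at two points --- you spell out the extra clique $A_1\cup(B_2\cup\cdots\cup B_k)$ where the paper merely says ``it is clear'', and you write down the isomorphism to $\Gamma(2,v)$ rather than appealing to Theorem~\ref{thm:p1} for that identification.
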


\begin{proof}
Let $G$ be a strongly $(2,v)$-clique-partitioned graph attaining the upper bound. Then by Lemma~\ref{lem:p1}, $G$ has $v(v-1)+v(v-2)$ edges and consists of two vertex-disjoint complete graphs $K_v$ joined by $v(v-2)$ edges. It is therefore the complete graph $K_{2v}$ from which $2v$ edges have been removed. Let the two $v$-cliques of $G$ be $X$ and $Y$. Since each vertex of $G$ has valency $(v-1)+(v-2)=2v-3$, the $2v$ edges which have been removed are a union of cycles, and all the edges in these cycles are between $X$ and $Y$. Suppose there is more than a single cycle, so that there is a cycle $x_1 y_1 x_2 y_2 \ldots x_m y_m x_1$ not containing all the vertices. Then the set $\{x_1,\ldots,x_m\}\cup (Y\setminus\{y_1,\ldots y_m\})$ is a further $v$-clique, contrary to the fact that $G$ is strongly clique-partitioned. Hence the edges removed must be a Hamiltonian cycle and so the graph must be $\Gamma(2, v)$.
\end{proof}

The case $v=2$ can also be solved completely.

\begin{theorem}\label{thm:v2}
The graph $\Gamma(n,2)$ is the unique maximal strongly $(n,2)$-clique-partitioned graph, and consists of $n$ vertex-disjoint edges.
\end{theorem}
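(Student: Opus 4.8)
The plan is to treat $v=2$ as the degenerate endpoint of the general picture and dispose of it by a direct edge count. First I would substitute $v=2$ into the bound of Lemma~\ref{lem:p1}: since $v-2=0$ the second summand vanishes, leaving the bound $\tfrac{n}{2}\cdot 2\cdot 1=n$. So ``maximal'' here simply means ``has exactly $n$ edges''.

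Next I would pin down the structure. Any strongly $(n,2)$-clique-partitioned graph $G$ of order $2n$ contains, by definition, $n$ vertex-disjoint $2$-cliques, i.e.\ $n$ vertex-disjoint edges, and these alone already contribute $n$ edges. Hence if $G$ attains the bound, it contains no further edges, so $G$ is precisely the disjoint union of $n$ edges. Conversely, the disjoint union of $n$ edges is strongly $(n,2)$-clique-partitioned: its only edges, and hence its only $2$-cliques, are the $n$ matching edges, so the decomposition of $V(G)$ into $n$ vertex-disjoint $2$-cliques exists and is unique. Since this graph has $n$ edges it meets the bound, so it is maximal. Together these two observations give both existence and uniqueness: the maximal strongly $(n,2)$-clique-partitioned graph is exactly the graph consisting of $n$ vertex-disjoint edges.

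Finally I would check that $\Gamma(n,2)$ is this graph directly from the adjacency rule. With $v=2$ the conditions ``$\ell-j\not\equiv 0$ or $1\pmod 2$'' and ``$j-\ell\not\equiv 0$ or $1\pmod 2$'' are never satisfied, because every integer is congruent to $0$ or $1$ modulo $2$; so the only edges are those arising from the first line of the rule, namely the one edge inside each clique. Hence $\Gamma(n,2)$ is the disjoint union of $n$ edges, exactly as already noted in the proof of Proposition~\ref{prop:scpaut}, and the theorem follows.

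There is no real obstacle in this case; the only point requiring a moment's care is verifying that the disjoint union of $n$ edges genuinely has the \emph{strong} clique-partition property — that the decomposition is unique and that no other $v$-cliques exist — which is immediate since a $2$-clique is nothing but an edge.
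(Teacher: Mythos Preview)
Your argument is correct and follows essentially the same approach as the paper's proof, which simply notes that the bound of Lemma~\ref{lem:p1} yields $n$ edges on $2n$ vertices and declares the result immediate. You have spelled out more of the verification (the converse direction, and the check that the adjacency rule for $\Gamma(n,2)$ collapses to the intra-clique edges only), but the core idea is identical.
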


\begin{proof}
From Lemma~\ref{lem:p1}, a graph attaining the bound has $n$ edges. Since the graph must contain $2n$ vertices, the result follows immediately.
\end{proof}

Next we deal with the case $v=3$. First we need the following definitions and lemma.

\textbf{Definitions.} A \emph{tournament} is a digraph in which every pair of distinct vertices is joined by precisely one arc. Equivalently, a tournament is a complete graph in which every edge has a specified orientation. A tournament is \emph{acyclic} if it contains no directed cycles.

First we observe that if a tournament is not acyclic then it must contain a directed 3-cycle. Suppose that a tournament contains a directed $m$-cycle, $(a_1,a_2,\ldots,a_m)$ where $m\geq 4$. If the tournament contains the arc $(a_3,a_1)$ then it contains a directed 3-cycle. Otherwise it contains the arc $(a_1,a_3)$ and so contains a directed $(m-1)$-cycle, $(a_1,a_3,\ldots,a_m)$. Repeating this argument, we see that the tournament contains a directed 3-cycle.

The following lemma is well-known, see for example~\cite[Theorem 7.13]{Foulds2012}.

\begin{lemma}\label{lem:tournament}
For every $n\geq 2$, there exists an acyclic tournament on $n$ vertices which is unique up to isomorphism.
\end{lemma}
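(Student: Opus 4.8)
The plan is to prove existence by an explicit construction and uniqueness by showing that an acyclic tournament is essentially the same thing as a linear order on its vertex set.

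For existence, I would take the vertex set $\{1,2,\ldots,n\}$ and orient every edge from the smaller vertex to the larger one, i.e.\ include the arc $(i,j)$ precisely when $i<j$. Any directed walk in this tournament has strictly increasing labels, so it cannot return to its starting vertex; hence this tournament (the \emph{transitive} tournament) is acyclic.

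For uniqueness, let $T$ be an arbitrary acyclic tournament on a set $V$ of $n$ vertices. Define a relation $\prec$ on $V$ by $u\prec v$ iff $T$ contains the arc $(u,v)$. Since $T$ is a tournament, for any two distinct $u,v\in V$ exactly one of $u\prec v$ and $v\prec u$ holds, so $\prec$ is total and asymmetric. The key step is transitivity: if $u\prec v$ and $v\prec w$ with $u,v,w$ distinct, then the arc between $u$ and $w$ cannot be $(w,u)$, for otherwise $(u,v,w)$ would be a directed $3$-cycle, contradicting — via the argument given just before the lemma, which reduces every directed cycle to a directed $3$-cycle — the acyclicity of $T$. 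Hence $u\prec w$, and $\prec$ is a strict total order on the $n$-element set $V$. Any two strict total orders on $n$-element sets are order-isomorphic (list the elements in increasing order and match them up), and the tournament $T$ is recovered from $\prec$ by orienting each edge in the direction of $\prec$; therefore $T$ is isomorphic to the transitive tournament constructed above. This proves both existence and uniqueness up to isomorphism.

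The only real content is the transitivity of $\prec$, and that is immediate from the already-established fact that an acyclic tournament has no directed $3$-cycle; the rest is bookkeeping about total orders, so I do not anticipate a genuine obstacle here.
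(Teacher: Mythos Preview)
Your proposal is correct and complete. The paper itself does not give a proof of this lemma at all: it simply states that the result is well known and cites \cite[Theorem 7.13]{Foulds2012}. So rather than comparing approaches, the relevant point is that you have supplied a self-contained argument where the paper only gives a reference. Your proof is the standard one --- identify an acyclic tournament with a strict total order via transitivity (which, as you note, follows from the absence of directed $3$-cycles established in the paragraph preceding the lemma) --- and there are no gaps.
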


We are now in a position to prove the next theorem.

\begin{theorem}\label{thm:v3}
The graph $\Gamma(n,3)$ is the unique maximal strongly $(n,3)$-clique-partitioned graph.
\end{theorem}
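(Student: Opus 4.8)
The plan is to start from an arbitrary maximal strongly $(n,3)$-clique-partitioned graph $G$ and extract from it a rigid combinatorial object. By the equality case in the proof of Lemma~\ref{lem:p1}, every vertex of $G$ has exactly one neighbour in each of the $n-1$ triangles not containing it; hence between any two of the $n$ triangles the cross-edges form a perfect matching, which I record as a bijection $\pi_{ij}$ of $\{0,1,2\}$, writing $(i,a)\sim(j,b)\iff b=\pi_{ij}(a)$ and $\pi_{ji}=\pi_{ij}^{-1}$. Since each vertex has a \emph{unique} neighbour in each other triangle, no new $3$-clique can use two vertices of one triangle, so the only way a new $3$-clique could appear is as a transversal $\{(i,a),(j,b),(k,c)\}$, and one checks this occurs exactly when $\pi_{ik}^{-1}\pi_{jk}\pi_{ij}$ has a fixed point. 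Thus $G$ is strongly clique-partitioned if and only if $\pi_{ik}^{-1}\pi_{jk}\pi_{ij}$ is fixed-point-free, hence a $3$-cycle, for every triple $i<j<k$.

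Next I normalise. Relabelling the vertices inside triangle $j$ appropriately for each $j\ge 1$ makes all matchings from triangle $0$ equal to the identity, $\pi_{0j}=\mathrm{id}$. Applying the condition above to the triples $(0,i,j)$ forces each $\pi_{ij}$ with $1\le i<j\le n-1$ to be a $3$-cycle, so $\pi_{ij}=\rho^{a_{ij}}$ with $\rho=(0\,1\,2)$ and $a_{ij}\in\{1,2\}$. Because $\langle\rho\rangle$ is abelian, the condition for a triple $(i,j,k)$ with $1\le i<j<k\le n-1$ becomes $a_{ij}+a_{jk}-a_{ik}\not\equiv 0\pmod 3$, that is: $a_{ik}=a_{ij}$ whenever $a_{ij}=a_{jk}$, and no constraint otherwise.

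The key step is to read $a$ as a tournament $T$ on $\{1,\ldots,n-1\}$, orienting the pair $\{i,j\}$ (with $i<j$) as $i\to j$ if $a_{ij}=1$ and as $j\to i$ if $a_{ij}=2$. A short case check on a putative directed $3$-cycle through $p<q<r$ shows that, in either of its two orientations, the constraint of the previous paragraph is violated; hence $T$ has no directed $3$-cycle, and so (by the elementary fact recalled just before Lemma~\ref{lem:tournament}) $T$ is acyclic. By Lemma~\ref{lem:tournament} the acyclic tournament on $n-1$ vertices is unique up to isomorphism, so any two maximal graphs give isomorphic tournaments. I then lift such a tournament isomorphism $\varphi$ to a graph isomorphism: send triangle $0$ to triangle $0$ and triangle $i$ to triangle $\varphi(i)$, keeping within each triangle the vertex labelling induced by the normalisation $\pi_{0j}=\mathrm{id}$; a direct verification (splitting on whether $\varphi$ preserves or reverses the natural order of a given pair of indices, and using $\rho^{-1}=\rho^2$) shows edges are preserved. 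Since $\Gamma(n,3)$ is itself maximal by Theorem~\ref{thm:p1} (and normalises to the constant colouring $a_{ij}\equiv 2$, whose tournament is transitive), this shows every maximal strongly $(n,3)$-clique-partitioned graph is isomorphic to $\Gamma(n,3)$.

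I expect the main obstacle to be the third step: recognising that the apparently ad hoc arithmetic condition $a_{ik}\ne a_{ij}+a_{jk}\pmod 3$ is exactly acyclicity of the associated tournament — this is what brings Lemma~\ref{lem:tournament} into play and pins down the isomorphism type. The bookkeeping in the final lifting step is routine but does require care with the two possible orders of each pair of triangle indices.
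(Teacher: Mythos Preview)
Your proof is correct and follows essentially the same route as the paper: after observing that the cross-edges between any two triangles form a perfect matching and normalising the matchings out of triangle~$0$, the remaining data is encoded as a tournament on $\{1,\ldots,n-1\}$, the strongly clique-partitioned condition forces this tournament to have no directed $3$-cycle (hence be acyclic), and Lemma~\ref{lem:tournament} finishes the argument. Your presentation is somewhat more algebraic (working with the permutations $\pi_{ij}$ and the cocycle condition $\pi_{ik}^{-1}\pi_{jk}\pi_{ij}$ fixed-point-free) and is more explicit about lifting the tournament isomorphism back to a graph isomorphism, but the structure of the argument is the same as the paper's.
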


\begin{proof}
Let $G$ be a strongly $(n,3)$-clique-partitioned graph attaining the bound. We take the vertex set of $G$ to be the set $\{(i,j):1\leq i\leq n-1,0\leq j\leq 2\}$ where the cliques are defined by the adjacencies $(i,j)\sim (i,k)$ for all $i=0,1,\ldots,n-1$ and $j,k=0,1,2,j\neq k$. From Lemma~\ref{lem:p1}, each vertex is also joined by one edge to each of the $n-1$ cliques not containing that vertex. So, without loss of generality, we can assume additional adjacencies $(0,j)\sim (i,j+2)$ for all $i=1,2,\ldots,n-1$ and $j=0,1,2$.

When $n=2$, there are no further adjacencies and the graph is $\Gamma(2,3)$, the triangular prism. (See Figure~\ref{fig:gamma23}.)

\begin{figure}\centering
	\begin{tikzpicture}[x=0.2mm,y=-0.2mm,inner sep=0.2mm,scale=0.6,thin,vertex/.style={circle,draw,minimum size=10,fill=white}]
\tiny
\node at (130.1,400) [vertex] (v1) {$02$};
\node at (221,242.5) [vertex] (v2) {$00$};
\node at (311.9,400) [vertex] (v3) {$01$};
\node at (468.1,485) [vertex] (v4) {$11$};
\node at (559,327.5) [vertex] (v5) {$12$};
\node at (649.9,485) [vertex] (v6) {$10$};
\path
	(v1) edge (v2)
	(v1) edge (v3)
	(v2) edge (v3)
	(v4) edge (v5)
	(v4) edge (v6)
	(v5) edge (v6)
	(v2) edge (v5)
	(v3) edge (v6)
	(v1) edge (v4)
	;
\end{tikzpicture}
	\caption{The graph $\Gamma(2,3)$}
	\label{fig:gamma23}
\end{figure}

When $n=3$, there are further adjacencies between clique 1 and clique 2. If $(1,j)\sim(2,j)$, $j=0,1,2$, further cliques would be introduced, so there are two possibilities:
\begin{enumerate}[label=(\alph*),topsep=-1ex,itemsep=-1ex]
	\item $(1,j)\sim(2,j+2), j=0,1,2$,\newline
		or
	\item $(1,j)\sim(2,j+1), j=0,1,2$.
\end{enumerate}
However, the graphs obtained by the two possibilities are isomorphic by the permutation
\[\big((1,0)\,(2,0)\big)\big((1,1)\,(2,1)\big)\big((1,2)\,(2,2)\big).\]
(See Figure~\ref{fig:gamma33}.)

\begin{figure}\centering
	\begin{tabular}{cc}
		\begin{tikzpicture}[x=0.2mm,y=-0.2mm,inner sep=0.2mm,scale=0.6,thin,vertex/.style={circle,draw,minimum size=10,fill=white}]
\tiny
\node at (250.4,606.5) [vertex] (v1) {$21$};
\node at (341.3,449) [vertex] (v2) {$22$};
\node at (432.3,606.5) [vertex] (v3) {$20$};
\node at (154.4,318) [vertex] (v4) {$02$};
\node at (245.3,160.5) [vertex] (v5) {$00$};
\node at (336.2,318) [vertex] (v6) {$01$};
\node at (492.4,403) [vertex] (v7) {$11$};
\node at (583.3,245.5) [vertex] (v8) {$12$};
\node at (674.2,403) [vertex] (v9) {$10$};
\path
	(v1) edge (v2)
	(v1) edge (v3)
	(v2) edge (v3)
	(v4) edge (v5)
	(v4) edge (v6)
	(v5) edge (v6)
	(v7) edge (v8)
	(v7) edge (v9)
	(v8) edge (v9)
	(v5) edge (v8)
	(v6) edge (v9)
	(v4) edge (v7)
	(v2) edge (v5)
	(v3) edge (v6)
	(v1) edge (v4)
	(v1) edge (v8)
	(v3) edge (v7)
	(v2) edge (v9)
	;
\end{tikzpicture} & \begin{tikzpicture}[x=0.2mm,y=-0.2mm,inner sep=0.2mm,scale=0.6,thin,vertex/.style={circle,draw,minimum size=10,fill=white}]
\tiny
\node at (250.4,606.5) [vertex] (v1) {$21$};
\node at (341.3,449) [vertex] (v2) {$22$};
\node at (432.3,606.5) [vertex] (v3) {$20$};
\node at (154.4,318) [vertex] (v4) {$02$};
\node at (245.3,160.5) [vertex] (v5) {$00$};
\node at (336.2,318) [vertex] (v6) {$01$};
\node at (492.4,403) [vertex] (v7) {$11$};
\node at (583.3,245.5) [vertex] (v8) {$12$};
\node at (674.2,403) [vertex] (v9) {$10$};
\path
	(v1) edge (v2)
	(v1) edge (v3)
	(v2) edge (v3)
	(v4) edge (v5)
	(v4) edge (v6)
	(v5) edge (v6)
	(v7) edge (v8)
	(v7) edge (v9)
	(v8) edge (v9)
	(v5) edge (v8)
	(v6) edge (v9)
	(v4) edge (v7)
	(v2) edge (v5)
	(v3) edge (v6)
	(v1) edge (v4)
	(v3) edge (v8)
	(v1) edge (v9)
	(v2) edge (v7)
	;
\end{tikzpicture} \\
		(a) & (b)
	\end{tabular}
	\caption{Constructing strongly $(3,3)$-clique-partitioned graphs}
	\label{fig:gamma33}
\end{figure}

Therefore, again without loss of generality, we may assume possibility (a). The graph obtained is $\Gamma(3,3)$ and is the graph Q24 illustrated in~\cite[p.145]{Read2005}.

When $n=4$, there are further adjacencies between clique $i$ and clique $k$, $1\leq i<k\leq 3$, i.e. $(i,k)\in\{(1,2),(1,3),(2,3)\}$. Again there are two possibilities:
\begin{enumerate}[label=(\alph*),topsep=-1ex,itemsep=-1ex]
	\item $(i,j)\sim(k,j+2), j=0,1,2$,\newline
	or
	\item $(i,j)\sim(k,j+1), j=0,1,2$.
\end{enumerate}
Note that possibility (b) can be written as $(k,j)\sim(i,j+2), j=0,1,2$. We denote possibility~(a) by $i\to k$ and possibility~(b) by $k\to i$. So up to isomorphism, there are two cases to consider:
\begin{enumerate}[label=(\roman*),topsep=-1ex,itemsep=-1ex]
	\item $1\to 2,\quad 2\to 3,\quad 1\to 3$,\newline
	and
	\item $1\to 2,\quad 2\to 3,\quad 3\to 1\quad$ (which is a directed 3-cycle).
\end{enumerate}
Case (i) gives the graph $\Gamma(4,3)$ but case (ii) introduces further cliques, for example the vertices $(1,0)$, $(2,2)$ and $(3,1)$.

Now let $n\geq 5$. There are further adjacencies between clique $i$ and clique $k$, $1\leq i<k\leq n-1$ and, following the same procedure as for the case $n=4$, these can be determined by a tournament on the vertex set $\{i:1\leq i\leq n-1\}$. However, in order not to introduce further cliques, the tournament must not contain a directed 3-cycle, i.e. it must be acyclic.

Thus from Lemma~\ref{lem:tournament}, a strongly $(n,3)$-clique-partitioned graph ($n\geq 5$) attaining the upper bound of Lemma~\ref{lem:p1} is unique and is the graph $\Gamma(n,3)$.
\end{proof}

The next case to consider is $n=3$ and $v=4$. We have the following result.

\begin{theorem}\label{thm:n3v4}
There are precisely two maximal strongly $(3,4)$-clique-partitioned graphs.
\end{theorem}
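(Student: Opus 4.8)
The plan is to reduce the problem to a finite case analysis on the three bipartite graphs joining the partition cliques.

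First I would fix the skeleton. Let $G$ be a maximal strongly $(3,4)$-clique-partitioned graph, with the three partition cliques called $A$, $B$, $C$; they account for $3\binom{4}{2}=18$ edges, and since $G$ meets the bound of Lemma~\ref{lem:p1}, every vertex is joined to exactly $v-2=2$ vertices of each clique not containing it. Hence the edges between any two of $A,B,C$ form a $2$-regular bipartite graph on $4+4$ vertices, so either a single $8$-cycle or two vertex-disjoint $4$-cycles; but a $4$-cycle here is a $K_{2,2}$, which together with the two edges lying inside the cliques it meets would complete a new $4$-clique, contradicting the strong property. So each of the three ``cross'' graphs is an $8$-cycle. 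Next I would note that a $4$-clique of $G$ either is one of $A,B,C$, or meets some clique in three vertices (impossible, by the degree restriction), or meets two cliques in two vertices each (impossible, as just shown), or has two vertices in one clique $X$ and one in each of the other two cliques $Y,Z$. A clique of the last kind, on vertices $x,x'\in X$, $y\in Y$, $z\in Z$, exists precisely when $x$ and $x'$ have a common neighbour $y$ in $Y$ --- equivalently, $x,x'$ are consecutive in the cyclic order the $XY$-cycle induces on $X$ --- and also a common neighbour $z$ in $Z$, and $y\sim z$; so this is the only configuration to be avoided.

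Then I would normalise. All $8$-cycles on a given $4+4$ vertex set are equivalent under relabelling the two parts, so the $AB$-cycle may be taken to be the circulant one, $a_i\sim b_{i-1},b_{i-2}$ with indices in $\Z_4$; the relabellings of $A$ and $B$ preserving it form a dihedral group of order $8$. Using this group together with arbitrary relabellings of $C$, the $AC$-cycle reduces to one of exactly two normal forms, according to whether the cyclic order it induces on $A$ agrees with the one induced by the $AB$-cycle (Case~A, in which the $AC$-cycle is again circulant, $a_i\sim c_{i-1},c_{i-2}$) or not (Case~B). In either case the $BC$-cycle remains an arbitrary $8$-cycle on $B\cup C$, to be enumerated up to the remaining symmetry, which in Case~A still includes the involution swapping $b_j$ with $c_j$ for every $j$. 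For each candidate $BC$-cycle one applies the criterion above with each of $A,B,C$ in turn playing the role of $X$; most candidates are eliminated immediately.

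The graphs that survive are $\Gamma(3,4)$ --- arising in Case~A with $BC$-cycle the circulant $b_j\sim c_{j-1},c_{j-2}$, and strongly $(3,4)$-clique-partitioned and maximal by Theorem~\ref{thm:p1} --- and one further graph, arising in Case~A with the non-circulant $BC$-cycle $b_0\sim c_1,c_3$, $b_1\sim c_0,c_2$, $b_2\sim c_0,c_3$, $b_3\sim c_1,c_2$. These two are not isomorphic: $\Gamma(3,4)$ is vertex-transitive by Proposition~\ref{prop:circulant}, while in the second graph the clique $A$ is attached to each of the others by a circulant cross-$8$-cycle whereas the $BC$-cross graph is not of that form, so (the clique-partition being unique, every automorphism permutes $A,B,C$) no automorphism carries $A$ onto $B$ or $C$; equivalently, one may count the triangles meeting all three cliques, finding $4$ for $\Gamma(3,4)$ and $6$ for the other graph. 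The step I expect to be the main obstacle is this enumeration: exploiting the residual symmetry --- the dihedral group, the $B\leftrightarrow C$ swap, and in the bookkeeping the permutations of $A,B,C$ --- well enough to keep the number of cases small, while making sure the forbidden-clique test is genuinely applied in all three orientations.
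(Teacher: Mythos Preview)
Your approach matches the paper's: fix the $AB$-cycle, reduce the $AC$-cycle to two normal forms, enumerate the possible $BC$-cycles and discard those creating a forbidden $4$-clique, then separate the two survivors by an invariant. The only differences are local --- the paper obtains the $8$-cycle structure by invoking Theorem~\ref{thm:n2} rather than your direct $K_{2,2}$ argument, distinguishes the two graphs via counts of distance-$3$ pairs (or automorphism groups) rather than your cross-triangle count, and actually carries out the enumeration in full (six $BC$-cycles in its Case~I after an easy exclusion, twenty in Case~II, each rejected one tabulated with the offending clique), which is exactly the step you flag as the main obstacle.
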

\begin{proof}
Let the vertex set of a maximal strongly $(3,4)$-clique-partitioned graph be the set $\{(i,j):0\leq i\leq 2,0\leq j\leq 3\}$ where the cliques are defined by the adjacencies $(i,j)\sim(i,\ell)$, $0\leq i\leq 2$, $0\leq j<\ell\leq 3$. In order to simplify the notation we will now denote vertices $(0,j)$, $(1,j)$ and $(2,j)$ by $Aj$, $Bj$ and $Cj$ respectively. Ignoring the vertices $Cj$ and the edges incident with these vertices, the reduced graph is strongly $(2,4)$-clique-partitioned and is therefore the graph $\Gamma(2,4)$. 

From Theorem~\ref{thm:n2}, the missing edges between any two cliques form an 8-cycle, and therefore the edges joining the vertices $Aj$ and $Bj$ also form an 8-cycle. Without loss of generality, we can assume it to be $(A0,B2,A3,B1,A2,B0,A1,B3)$. Then, also without loss of generality, the edges joining the vertices $Aj$ and $Cj$ form one of the following 8-cycles.
\begin{enumerate}[label=(\Roman*),itemsep=0pt]
	\item $(A0,C2,A3,C1,A2,C0,A1,C3)$,
	\item $(A0,C2,A2,C1,A1,C0,A3,C3)$,
	\item $(A0,C2,A2,C1,A3,C0,A1,C3)$.
\end{enumerate}
However, cases (II) and (III) are isomorphic under the permutation $(A1\,A3) (B0\,B1) (B2\,B3)$. Thus there are only the two cases (I) and (II) to consider. It remains to determine the edges joining the vertices $Bj$ and $Cj$, which again must form an 8-cycle.

Consider first case (I). It is not possible that $Bj\sim Cj$ for any $j$, since a 4-clique on the vertex set $\{A(j+1),A(j+2),Bj,Cj\}$ would be created. This leaves just six possibilities.
\begin{enumerate}[label=(\roman*),itemsep=0pt]
	\item $(B0,C1,B2,C3,B1,C0,B3,C2)$,
	\item $(B0,C1,B3,C2,B1,C0,B2,C3)$,
	\item $(B0,C1,B3,C0,B2,C3,B1,C2)$,
	\item $(B0,C2,B3,C1,B2,C0,B1,C3)$,
	\item $(B0,C2,B1,C0,B3,C1,B2,C3)$,
	\item $(B0,C1,B2,C0,B3,C2,B1,C3)$.
\end{enumerate}

However, the graphs obtained from possibilities (i) and (ii), (iii) and (iv), (v) and (vi) are isomorphic under the permutation $(B0\,C0)(B1\,C1)(B2\,C2)(B3\,C3)$. Further, possibilities (i) and (vi) are isomorphic under the permutation $(A0\,A1\,A2\,A3)(B0\,B1\,B2\,B3)(C0\,C1\,C2\,C3)$. This leaves just two possibilities, (i) and (iv), the latter of which gives rise to the graph $\Gamma(3,4)$.

Denote the graph obtained from the former possibility by $\Theta(3,4)$. It remains to prove that this is indeed a maximal strongly $(3,4)$-clique-partitioned graph and is not isomorphic to $\Gamma(3,4)$. Both these tasks are readily accomplished using computer packages~\cite{GAP4,GRAPE} which also allow us to compute the automorphism groups of the two graphs. For $\Theta(3,4)$, the automorphism group is a Klein 4-group generated by the permutations $(A1\,A3)(B0\,B1)(B2\,B3)(C0\,C1)(C2\,C3)$ and $(A0\,A2)(B0\,C3)(B1\,C2)(B2\,C1)(B3\,C0)$; and for $\Gamma(3,4)$ it is the dihedral group of order 24, in agreement with Proposition~\ref{prop:scpaut}.

Next consider case (II). It is not possible that $B2\sim C3$ since a 4-clique $\{A0,A3,B2,C3\}$ would be created; nor that $B0\sim C1$ since a 4-clique $\{A1,A2,B0,C1\}$ would be created. This reduces the number of possibilities for the 8-cycle on the vertices $Bj$ and $Cj$ to 20; 16 of which can also be eliminated because further 4-cliques are formed. In Table~\ref{tab:n3v4cycles} we give a list of these possibilities, together with any cliques which occur.

We find that the graphs obtained from case (II), possibilities (i), (vii), (xii) and (xviii) are isomorphic to the graph obtained from case (I), possibility (i) by the following permutations.

\begin{tabular}{rl}
	Possibility & Permutation \\
	(i) & $(A0\,B1)(A1\,B0)(A2\,B3)(A3\,B2)(C0\,C2\,C1)$ \\
	(vii) & $(A0\,B1\,C1)(A1\,B0\,C3\,A2\,B3\,C0)(A3\,B2\,C2)$ \\
	(xii) & $(A0\,C0\,B3\,A1\,C1\,B1)(A2\,C2\,B0)(A3\,C3\,B2)$ \\
	(xviii) & $(A0\,C0\,A2\,C2\,A3\,C3\,A1\,C1)(B0\,B2\,B3\,B1)$ \\
\end{tabular}

\end{proof}
It may be useful to describe the relationship between the two graphs $\Gamma(3,4)$ and $\Theta(3,4)$. The adjacencies between the cliques in the former graph are given by the Hamiltonian cycles $(X0,Y2,X3,Y1,X2,Y0,X1,Y3)$ where $(X,Y)\in\{(A,B),(A,C),(B,C)\}$. For the graph $\Theta(3,4)$ the edges $(B0,C3)$, $(B2,C0)$ and $(B3,C1)$ are replaced by $(B0,C1)$, $(B2,C3)$ and $(B3,C0)$. The two graphs are illustrated in Figure~\ref{fig:n3v4}.

\begin{figure}[h]\centering
\begin{tabular}{cc}
\begin{tikzpicture}[x=0.2mm,y=-0.2mm,inner sep=0.2mm,scale=0.6,thick,vertex/.style={circle,draw,minimum size=12,fill=lightgray}]
\tiny
\node at (370,145) [vertex,fill=white] (v1) {$A0$};
\node at (258,175) [vertex,fill=white] (v2) {$B0$};
\node at (175,258) [vertex,fill=white] (v3) {$C0$};
\node at (145,370) [vertex,fill=white] (v4) {$A3$};
\node at (175,483) [vertex,fill=white] (v5) {$B3$};
\node at (258,565) [vertex,fill=white] (v6) {$C3$};
\node at (370,595) [vertex,fill=white] (v7) {$A2$};
\node at (483,565) [vertex,fill=white] (v8) {$B2$};
\node at (565,483) [vertex,fill=white] (v9) {$C2$};
\node at (595,370) [vertex,fill=white] (v10) {$A1$};
\node at (565,258) [vertex,fill=white] (v11) {$B1$};
\node at (483,175) [vertex,fill=white] (v12) {$C1$};
\path
	(v1) edge (v4)
	(v1) edge (v5)
	(v1) edge (v6)
	(v1) edge (v7)
	(v1) edge (v8)
	(v1) edge (v9)
	(v1) edge (v10)
	(v2) edge (v5)
	(v2) edge (v6)
	(v2) edge (v7)
	(v2) edge (v8)
	(v2) edge (v9)
	(v2) edge (v10)
	(v2) edge (v11)
	(v3) edge (v6)
	(v3) edge (v7)
	(v3) edge (v8)
	(v3) edge (v9)
	(v3) edge (v10)
	(v3) edge (v11)
	(v3) edge (v12)
	(v4) edge (v7)
	(v4) edge (v8)
	(v4) edge (v9)
	(v4) edge (v10)
	(v4) edge (v11)
	(v4) edge (v12)
	(v5) edge (v8)
	(v5) edge (v9)
	(v5) edge (v10)
	(v5) edge (v11)
	(v5) edge (v12)
	(v6) edge (v9)
	(v6) edge (v10)
	(v6) edge (v11)
	(v6) edge (v12)
	(v7) edge (v10)
	(v7) edge (v11)
	(v7) edge (v12)
	(v8) edge (v11)
	(v8) edge (v12)
	(v9) edge (v12)
	;
\end{tikzpicture} &
\begin{tikzpicture}[x=0.2mm,y=-0.2mm,inner sep=0.2mm,scale=0.6,thick,vertex/.style={circle,draw,minimum size=12,fill=lightgray}]
\tiny
\node at (370,145) [vertex,fill=white] (v1) {$A0$};
\node at (258,175) [vertex,fill=white] (v2) {$B0$};
\node at (175,258) [vertex,fill=white] (v3) {$C0$};
\node at (145,370) [vertex,fill=white] (v4) {$A3$};
\node at (175,483) [vertex,fill=white] (v5) {$B3$};
\node at (258,565) [vertex,fill=white] (v6) {$C3$};
\node at (370,595) [vertex,fill=white] (v7) {$A2$};
\node at (483,565) [vertex,fill=white] (v8) {$B2$};
\node at (565,483) [vertex,fill=white] (v9) {$C2$};
\node at (595,370) [vertex,fill=white] (v10) {$A1$};
\node at (565,258) [vertex,fill=white] (v11) {$B1$};
\node at (483,175) [vertex,fill=white] (v12) {$C1$};
\path
	(v1) edge (v4)
	(v1) edge (v5)
	(v1) edge (v6)
	(v1) edge (v7)
	(v1) edge (v8)
	(v1) edge (v9)
	(v1) edge (v10)
	(v2) edge (v5)
	(v2) edge (v7)
	(v2) edge (v8)
	(v2) edge (v9)
	(v2) edge (v10)
	(v2) edge (v11)
	(v3) edge (v6)
	(v3) edge (v7)
	(v3) edge (v9)
	(v3) edge (v10)
	(v3) edge (v11)
	(v3) edge (v12)
	(v4) edge (v7)
	(v4) edge (v8)
	(v4) edge (v9)
	(v4) edge (v10)
	(v4) edge (v11)
	(v4) edge (v12)
	(v5) edge (v8)
	(v5) edge (v9)
	(v5) edge (v10)
	(v5) edge (v11)
	(v6) edge (v9)
	(v6) edge (v10)
	(v6) edge (v11)
	(v6) edge (v12)
	(v7) edge (v10)
	(v7) edge (v11)
	(v7) edge (v12)
	(v8) edge (v11)
	(v8) edge (v12)
	(v9) edge (v12)
	(v2) edge (v12)
	(v6) edge (v8)
	(v3) edge (v5)
	;
\end{tikzpicture} \\
$\Gamma(3,4)$ & $\Theta(3,4)$
\end{tabular}
\caption{The two strongly $(3,4)$-clique-partitioned graphs}
\label{fig:n3v4}
\end{figure}

\begin{table}[h]\centering
	\begin{tabular}{rll}
		& Cycle & Clique formed \\
		(i) & $(B0,C3,B1,C1,B2,C0,B3,C2)$ &  \\
		(ii) & $(B0,C3,B1,C1,B2,C2,B3,C0)$ & $\{A0,B2,B3,C2\}$ \\
		(iii) & $(B0,C0,B1,C1,B2,C2,B3,C3)$ & $\{A0,B2,B3,C2\}$ \\
		(iv) & $(B0,C2,B1,C1,B2,C0,B3,C3)$ & $\{A2,B0,B1,C2\}$ \\
		(v) & $(B0,C3,B1,C0,B2,C1,B3,C2)$ & $\{A3,B1,B2,C0\}$ \\
		(vi) & $(B0,C3,B1,C2,B2,C1,B3,C0)$ & $\{A1,B0,B3,C0\}$ \\
		(vii) & $(B0,C0,B1,C2,B2,C1,B3,C3)$ &  \\
		(viii) & $(B0,C2,B1,C0,B2,C1,B3,C3)$ & $\{A2,B0,B1,C2\}$ \\
		(ix) & $(B0,C3,B1,C1,B3,C0,B2,C2)$ & $\{A1,B3,C0,C1\}$ \\
		(x) & $(B0,C3,B1,C1,B3,C2,B2,C0)$ & $\{A0,B2,B3,C2\}$ \\
		(xi) & $(B0,C3,B1,C0,B3,C1,B2,C2)$ & $\{A1,B3,C0,C1\}$ \\
		(xii) & $(B0,C3,B1,C2,B3,C1,B2,C0)$ &  \\
		(xiii) & $(B0,C0,B1,C3,B3,C1,B2,C2)$ & $\{A3,B1,C0,C3\}$ \\
		(xiv) & $(B0,C2,B1,C3,B3,C1,B2,C0)$ & $\{A2,B0,B1,C2\}$ \\
		(xv) & $(B0,C0,B2,C1,B1,C3,B3,C2)$ & $\{A0,B3,C2,C3\}$ \\
		(xvi) & $(B0,C2,B2,C1,B1,C3,B3,C0)$ & $\{A1,B0,B3,C0\}$ \\
		(xvii) & $(B0,C0,B2,C1,B1,C2,B3,C3)$ & $\{A0,B3,C2,C3\}$ \\
		(xviii) & $(B0,C2,B2,C1,B1,C0,B3,C3)$ &  \\
		(xix) & $(B0,C0,B2,C2,B1,C1,B3,C3)$ & $\{A2,B1,C1,C2\}$ \\
		(xx) & $(B0,C2,B2,C0,B1,C1,B3,C3)$ & $\{A3,B1,B2,C0\}$ \\
	\end{tabular}
	\caption{Possible cycles for $n=3,v=4$: Case (II)}
	\label{tab:n3v4cycles}
\end{table}

We come to the case $n=4$ and $v=4$, the original problem. An exhaustive consideration of the many cases along the lines of the proof of Theorem~\ref{thm:n3v4} is infeasible. However, by computer search we are able to enumerate the non-isomorphic graphs; there are in fact six of them. In Table~\ref{tab:scp44} we show these graphs by specifying the 8-cycles of edges between cliques in the form above. Graph~1 in the table is $\Gamma(4,4)$.

For $v=4$ and $n\geq 5$, the enumeration becomes increasingly challenging. At $n=5$ the computer search yields exactly 24 graphs; at $n=6$ there are at least 129, and at $n=7$ at least 828. 

\begin{table}[h]\centering
	\begin{tabular}{|cll|}
		\hline
		Graph & Connections between cliques & Automorphism group \\
			\hline
			1 & $(A0,B2,A3,B1,A2,B0,A1,B3)$ & $D_{32}$ \\
			& $(A0,C2,A3,C1,A2,C0,A1,C3)$ & \\
			& $(B0,C2,B3,C1,B2,C0,B1,C3)$ & \\
			& $(A0,D2,A3,D1,A2,D0,A1,D3)$ & \\
			& $(B0,D2,B3,D1,B2,D0,B1,D3)$ & \\
			& $(C0,D2,C3,D1,C2,D0,C1,D3)$ & \\
		  \hline
			2 & $(A0,B2,A3,B1,A2,B0,A1,B3)$ & $\Z_2\times\Z_2$ \\
		    & $(A0,C2,A3,C1,A2,C0,A1,C3)$ & \\
		    & $(B0,C2,B3,C1,B2,C0,B1,C3)$ & \\
		    & $(A0,D2,A3,D1,A2,D0,A1,D3)$ & \\
		    & $(B0,D2,B3,D1,B2,D0,B1,D3)$ & \\
		    & $(C0,D1,C2,D3,C1,D0,C3,D2)$ & \\
		  \hline
			3 & $(A0,B2,A3,B1,A2,B0,A1,B3)$ & $\Z_2$ \\
			& $(A0,C2,A3,C1,A2,C0,A1,C3)$ & \\
			& $(B0,C2,B3,C1,B2,C0,B1,C3)$ & \\
			& $(A0,D2,A3,D1,A2,D0,A1,D3)$ & \\
			& $(B0,D1,B2,D3,B1,D0,B3,D2)$ & \\
			& $(C0,D1,C2,D3,C1,D0,C3,D2)$ & \\
		  \hline
			4 & $(A0,B2,A3,B1,A2,B0,A1,B3)$ & $\Z_2\times\Z_2$ \\
			& $(A0,C2,A3,C1,A2,C0,A1,C3)$ & \\
			& $(B0,C2,B3,C1,B2,C0,B1,C3)$ & \\
			& $(A0,D0,A1,D3,A3,D2,A2,D1)$ & \\
			& $(B0,D0,B1,D3,B3,D2,B2,D1)$ & \\
			& $(C0,D0,C1,D3,C3,D2,C2,D1)$ & \\
		  \hline
			5 & $(A0,B2,A3,B1,A2,B0,A1,B3)$ & $\Z_2$ \\
			& $(A0,C2,A3,C1,A2,C0,A1,C3)$ & \\
			& $(B0,C1,B2,C3,B1,C0,B3,C2)$ & \\
			& $(A0,D2,A3,D1,A2,D0,A1,D3)$ & \\
			& $(B0,D1,B2,D3,B1,D0,B3,D2)$ & \\
			& $(C0,D1,C2,D0,C3,D2,C1,D3)$ & \\
		  \hline
			6 & $(A0,B2,A3,B1,A2,B0,A1,B3)$ & $\Z_2\times\Z_2$ \\
			& $(A0,C2,A3,C1,A2,C0,A1,C3)$ & \\
			& $(B0,C1,B2,C3,B1,C0,B3,C2)$ & \\
			& $(A0,D0,A1,D3,A3,D2,A2,D1)$ & \\
			& $(B0,D0,B1,D1,B2,D3,B3,D2)$ & \\
			& $(C0,D0,C1,D1,C2,D3,C3,D2)$ & \\
			\hline
	\end{tabular}
	\caption{The maximal strongly $(4,4)$-clique-partitioned graphs}
	\label{tab:scp44}
\end{table}

\section{Weakly clique-partitioned graphs}\label{sec:weak}
To answer Question 2, we again derive a simple upper bound based on counting edges. Recall that in this version of the problem, we allow the possibility of additional $v$-cliques in our graph but require that the decomposition into $n$ vertex-disjoint $v$-cliques be unique.

\begin{lemma}\label{lem:p2}
Let $n\geq 2$ and $v\geq 2$. Let $G$ be a weakly $(n,v)$-clique-partitioned graph. Then the number of edges in $G$ is at most
\[\binom{nv}{2}-\frac{n(n-1)v}{2}.\]
\end{lemma}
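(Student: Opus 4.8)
The plan is to bound the number of \emph{non-edges} of $G$ from below, by showing that between every pair of cliques in the (forced) decomposition at least $v$ pairs of vertices are non-adjacent, and then summing this deficit over the $\binom{n}{2}$ pairs. Let $Q_1,\ldots,Q_n$ be the unique decomposition of $V(G)$ into $v$-cliques. Since each $Q_i$ induces a complete graph, every non-edge of $G$ joins two distinct cliques, so
\[
|E(G)| = \binom{nv}{2} - \sum_{1\le i<j\le n} N_{ij},
\]
where $N_{ij}$ is the number of non-adjacent pairs with one endpoint in $Q_i$ and one in $Q_j$. Thus it suffices to prove $N_{ij}\ge v$ for every pair $i<j$, since then $\sum_{i<j} N_{ij}\ge \binom{n}{2}v = \tfrac12 n(n-1)v$, which is exactly the claimed bound.

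To prove $N_{ij}\ge v$, I would argue by contradiction: suppose $N_{ij}\le v-1$ for some $i<j$. Each of these $N_{ij}$ non-edges has exactly one endpoint in $Q_i$, so at most $v-1$ vertices of $Q_i$ are endpoints of a non-edge to $Q_j$; as $|Q_i|=v$, some vertex $a\in Q_i$ is adjacent in $G$ to \emph{all} of $Q_j$. Symmetrically some $b\in Q_j$ is adjacent to all of $Q_i$, and $a\ne b$ since $Q_i\cap Q_j=\emptyset$. Now set $Q_i' = (Q_i\setminus\{a\})\cup\{b\}$ and $Q_j' = (Q_j\setminus\{b\})\cup\{a\}$. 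Both are $v$-cliques: $Q_i\setminus\{a\}$ is a clique and $b$ is adjacent to every vertex of it, and similarly for $Q_j'$. They are disjoint, and $Q_i'\cup Q_j' = Q_i\cup Q_j$, so replacing $Q_i,Q_j$ by $Q_i',Q_j'$ and leaving the remaining cliques untouched gives a decomposition of $V(G)$ into $n$ vertex-disjoint $v$-cliques.

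It remains to check that this decomposition is genuinely different from the original, and this is the one place where the hypothesis $v\ge 2$ is used: $Q_i'$ contains $b\notin Q_i$, yet $Q_i'\supseteq Q_i\setminus\{a\}$, a non-empty subset of $Q_i$; hence $Q_i'$ is distinct from $Q_i$ and from every $Q_k$ with $k\ne i$ (those being disjoint from $Q_i$), so $Q_i'$ is a part not present in the original decomposition. This contradicts uniqueness, proving $N_{ij}\ge v$ and hence the lemma. I expect the argument producing the ``free'' vertices $a$ and $b$ and the verification that the swap yields a bona fide new decomposition to be the only substantive points; the edge count itself is a routine application of $\binom{nv}{2}=n\binom{v}{2}+\binom{n}{2}v^2$.
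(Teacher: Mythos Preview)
Your proof is correct and follows essentially the same approach as the paper: both arguments show that between any pair of cliques at least $v$ edges must be missing, by finding vertices $a,b$ fully joined to the opposite clique and swapping them to produce a second decomposition. Your version is in fact slightly more careful than the paper's, in that you explicitly verify the swapped decomposition is genuinely different and note where the hypothesis $v\ge 2$ enters.
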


\begin{proof}
To get an upper bound on the number of edges in $G$, we find a lower bound on the number of edges we must remove from a complete graph to make the weakly clique-partitioned property hold.

We know $G$ has $nv$ vertices and can be partitioned into $n$ vertex-disjoint $v$-cliques. Let $N$ be the number of edges removed from the complete graph $K_{nv}$ to obtain $G$. 

Since there are $n(n-1)/2$ pairs of $v$-cliques in the decomposition of $G$, if $N<n(n-1)v/2$ then there must exist some pair $X,Y$ of $v$-cliques in the decomposition which have fewer than $v$ edges missing between them. So there is some $x\in X$ connected to every vertex of $Y$, and some $y\in Y$ connected to every vertex of $X$. 

This leads to a $v$-clique decomposition of $G$ including new cliques $(X\setminus\{x\})\cup\{y\}$ and $(Y\setminus\{y\})\cup\{x\}$, contrary to the uniqueness of the decomposition of $G$. So $N\geq n(n-1)v/2$ and so an upper bound for the number of edges in $G$ is
\[\binom{nv}{2}-\frac{n(n-1)v}{2}.\]
\end{proof}

Again we will call a weakly clique-partitioned graph attaining this bound \emph{maximal}. To show that maximal graphs exist, we define a new graph $\Gamma'(n,v)$ with the same vertex set as before: $\{(i,j):0\leq i\leq n-1,0\leq j\leq v-1\}$.

To define the edges of $\Gamma'(n,v)$, we begin with the complete graph on this vertex set and remove all edges joining $(i,0)$ to $(k,\ell)$ for all $i=0,1,\ldots,n-2$, all $k=i+1,\ldots,n-1$ and all $\ell=0,1,\ldots,v-1$.

The number of edges removed from $\Gamma'(n,v)$ is $(n-1)v+(n-2)v+\cdots+v=n(n-1)v/2$ and so the graph attains the bound of Lemma~\ref{lem:p2}.

Notice that any two vertices with the same first coordinate remain adjacent in $\Gamma'(n,v)$, so each set of $v$ vertices of the form $(i,j)$ for fixed $i$ forms a $v$-clique, which we number $i$. 

This construction is illustrated in Figure~\ref{fig:gammaprime} for the case $n=4,v=4$. In Figure~\ref{fig:gammaprime}(a) we remove all edges from vertex $(0,0)$ to vertices in cliques $1,2,3$. In Figure~\ref{fig:gammaprime}(b) we remove all edges from vertex $(1,0)$ to vertices in cliques $2,3$ and in Figure~\ref{fig:gammaprime}(c) we remove all edges from vertex $(2,0)$ to vertices in clique $3$.

\begin{figure}\centering
	\begin{tabular}{ccc}
		\begin{tikzpicture}[x=0.2mm,y=-0.2mm,inner sep=0.2mm,scale=0.6,thin,vertex/.style={circle,draw,minimum size=9,fill=white}]
\tiny
\node at (150,150) [vertex] (v1) {$00$};
\node at (210,150) [vertex] (v2) {$01$};
\node at (210,210) [vertex] (v3) {$02$};
\node at (150,210) [vertex] (v4) {$03$};
\node at (350,190) [vertex] (v5) {$10$};
\node at (410,190) [vertex] (v6) {$11$};
\node at (410,250) [vertex] (v7) {$12$};
\node at (350,250) [vertex] (v8) {$13$};
\node at (310,390) [vertex] (v9) {$20$};
\node at (370,390) [vertex] (v10) {$21$};
\node at (370,450) [vertex] (v11) {$22$};
\node at (310,450) [vertex] (v12) {$23$};
\node at (110,350) [vertex] (v13) {$30$};
\node at (170,350) [vertex] (v14) {$31$};
\node at (170,410) [vertex] (v15) {$32$};
\node at (110,410) [vertex] (v16) {$33$};
\path
	(v1) edge (v5)
	(v1) edge (v6)
	(v1) edge (v7)
	(v1) edge (v8)
	(v1) edge (v9)
	(v1) edge (v10)
	(v1) edge (v11)
	(v1) edge (v12)
	(v1) edge (v13)
	(v1) edge (v14)
	(v1) edge (v15)
	(v1) edge (v16)
	;
\end{tikzpicture}~~ & 
		\begin{tikzpicture}[x=0.2mm,y=-0.2mm,inner sep=0.2mm,scale=0.6,thin,vertex/.style={circle,draw,minimum size=9,fill=white}]
\tiny
\node at (150,150) [vertex] (v1) {$00$};
\node at (210,150) [vertex] (v2) {$01$};
\node at (210,210) [vertex] (v3) {$02$};
\node at (150,210) [vertex] (v4) {$03$};
\node at (350,190) [vertex] (v5) {$10$};
\node at (410,190) [vertex] (v6) {$11$};
\node at (410,250) [vertex] (v7) {$12$};
\node at (350,250) [vertex] (v8) {$13$};
\node at (310,390) [vertex] (v9) {$20$};
\node at (370,390) [vertex] (v10) {$21$};
\node at (370,450) [vertex] (v11) {$22$};
\node at (310,450) [vertex] (v12) {$23$};
\node at (110,350) [vertex] (v13) {$30$};
\node at (170,350) [vertex] (v14) {$31$};
\node at (170,410) [vertex] (v15) {$32$};
\node at (110,410) [vertex] (v16) {$33$};
\path
	(v5) edge (v9)
	(v5) edge (v10)
	(v5) edge (v11)
	(v5) edge (v12)
	(v5) edge (v13)
	(v5) edge (v14)
	(v5) edge (v15)
	(v5) edge (v16)
	;
\end{tikzpicture}~~ & 
		\begin{tikzpicture}[x=0.2mm,y=-0.2mm,inner sep=0.2mm,scale=0.6,thin,vertex/.style={circle,draw,minimum size=9,fill=white}]
\tiny
\node at (150,150) [vertex] (v1) {$00$};
\node at (210,150) [vertex] (v2) {$01$};
\node at (210,210) [vertex] (v3) {$02$};
\node at (150,210) [vertex] (v4) {$03$};
\node at (350,190) [vertex] (v5) {$10$};
\node at (410,190) [vertex] (v6) {$11$};
\node at (410,250) [vertex] (v7) {$12$};
\node at (350,250) [vertex] (v8) {$13$};
\node at (310,390) [vertex] (v9) {$20$};
\node at (370,390) [vertex] (v10) {$21$};
\node at (370,450) [vertex] (v11) {$22$};
\node at (310,450) [vertex] (v12) {$23$};
\node at (110,350) [vertex] (v13) {$30$};
\node at (170,350) [vertex] (v14) {$31$};
\node at (170,410) [vertex] (v15) {$32$};
\node at (110,410) [vertex] (v16) {$33$};
\path
	(v9) edge (v13)
	(v9) edge (v14)
	(v9) edge (v15)
	(v9) edge (v16)
	;
\end{tikzpicture}~~\\
		(a) & (b) & (c)
	\end{tabular}

	\caption{Construction of the graph $\Gamma'(4,4)$ by deleting edges}
	\label{fig:gammaprime}
\end{figure}

\begin{theorem}\label{thm:p2}
Let $n\geq 2$ and $v\geq 2$. Then $\Gamma'(n,v)$ is weakly $(n,v)$-clique-partitioned and hence the upper bound of Lemma~\ref{lem:p2} is attained.
\end{theorem}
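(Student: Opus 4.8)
The plan is to notice that the edge count needs no further work: the construction deletes exactly $n(n-1)v/2$ edges from $K_{nv}$, so $\Gamma'(n,v)$ meets the bound of Lemma~\ref{lem:p2} the moment we know it is weakly $(n,v)$-clique-partitioned. The existence of at least one $v$-clique decomposition is immediate, since for each fixed $i$ the set $\{(i,j):0\le j\le v-1\}$ is a $v$-clique (no edge inside a row was ever deleted) and these $n$ rows partition the vertex set. Hence the entire content of the theorem is the \emph{uniqueness} of this decomposition, which I would establish by induction on $n$.

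The key structural observation is that the vertex $(0,0)$ has degree exactly $v-1$ in $\Gamma'(n,v)$: every edge from $(0,0)$ to a vertex $(k,\ell)$ with $k\ge 1$ was removed in the construction, so the neighbourhood of $(0,0)$ is precisely $\{(0,1),\ldots,(0,v-1)\}$, the remainder of row $0$. Since these $v-1$ vertices themselves form a clique, the only $v$-clique of $\Gamma'(n,v)$ containing $(0,0)$ is row $0$ itself. Consequently, in \emph{any} partition of the vertex set into $n$ vertex-disjoint $v$-cliques, the part containing $(0,0)$ must be exactly row $0$.

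For the inductive step I would then delete row $0$ and check that the subgraph induced on $\{(i,j):1\le i\le n-1,\ 0\le j\le v-1\}$ is, after relabelling $i\mapsto i-1$, exactly $\Gamma'(n-1,v)$: the edges of $\Gamma'(n,v)$ with both endpoints outside row $0$ form $K_{(n-1)v}$ minus precisely the edges $\{(i,0),(k,\ell)\}$ with $1\le i\le n-2$ and $i<k\le n-1$, which is the defining deletion set of $\Gamma'(n-1,v)$. Any alternative $v$-clique decomposition of $\Gamma'(n,v)$ therefore restricts, once its forced part (row $0$) is removed, to a $v$-clique decomposition of this copy of $\Gamma'(n-1,v)$, and the induction hypothesis forces it to be the standard one; the base case $n=1$ (a single $v$-clique) is trivial. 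Combining uniqueness with the edge count shows the bound of Lemma~\ref{lem:p2} is attained.

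I do not anticipate a genuine obstacle; the only points requiring care are reading off $\deg(0,0)=v-1$ directly from the list of deleted edges, and confirming that excising row $0$ returns a clean copy of $\Gamma'(n-1,v)$ so that the recursion runs. One could equally dispense with the induction and argue directly, peeling off rows $0,1,2,\ldots$ in turn: once rows $0,\ldots,i-1$ have been forced, the vertex $(i,0)$ has all of its remaining neighbours inside row $i$, so its clique in any decomposition must be row $i$.
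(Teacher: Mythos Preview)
Your proposal is correct and follows essentially the same approach as the paper: both arguments hinge on the observation that the vertex $(i,0)$ has no neighbours outside clique $i$ once cliques $0,\ldots,i-1$ have been stripped away, forcing clique $i$ to appear in any decomposition. The paper presents this as a direct iterative ``peeling'' argument (your second formulation), whereas you wrap it in an induction on $n$ first; the content is identical.
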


\begin{proof}
We proceed iteratively. Vertex $(0,0)$ is not adjacent to any vertex not in clique number 0, so any $v$-clique decomposition must include clique 0.

In the remainder of the graph, vertex $(1,0)$ is not adjacent to any vertex not in clique 1, so any $v$-clique decomposition must include clique 1 also. 

This argument can be repeated for each numbered $v$-clique, and so there is a unique decomposition of $\Gamma'(n,v)$ into $n$ $v$-cliques as required.
\end{proof}

In contrast with the strongly clique-partitioned case in Section~\ref{sec:strong}, it turns out that these graphs are the unique weakly clique-partitioned graphs attaining the upper bound. In the case $v=2$, weakly clique-partitioned graphs are precisely those which admit a unique perfect matching. The structure of edge-maximal graphs in this class was deduced by Hetyei~\cite{Hetyei1964}; see also~\cite[Corollary 1.6]{Lovasz1972}. By Hetyei's results, the graphs $\Gamma'(n,2)$ are the unique maximal weakly clique-partitioned graphs.

Our aim now is to extend this uniqueness result to all $v\geq 2$ and all $n\geq 2$. We begin with the case $n=2$.

\begin{theorem}\label{thm:wcpn2}
Let $v\geq 2$. Let $G$ be a maximal weakly $(2,v)$-clique-partitioned graph. Then $G$ is isomorphic to $\Gamma'(2,v)$.
\end{theorem}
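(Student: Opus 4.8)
The plan is to identify the non-edges of $G$. Let $\{X,Y\}$ be the unique decomposition of $G$ into two $v$-cliques. Maximality together with Lemma~\ref{lem:p2} forces exactly $v$ edges to be missing from the complete graph $K_{2v}$ on $V(G)$, and since $X$ and $Y$ are themselves cliques of $G$, each of these $v$ missing edges runs between $X$ and $Y$. Thus $G = K_{2v} - M$, where $M$ is a bipartite graph on $X\sqcup Y$ with $|E(M)| = v$; it then suffices to show that $M$ is a star $K_{1,v}$, for in that case $G$ is exactly $\Gamma'(2,v)$ (relabelling $X$ and $Y$ if necessary).

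The first step is to recast the weak clique-partitioned property in terms of $M$. If $\{X',Y'\}$ is \emph{any} decomposition of $G$ into two $v$-cliques, then for each connected component $C$ of $M$ (isolated vertices being one-vertex components) either $C\cap X\subseteq X'$ and $C\cap Y\subseteq Y'$, or the two roles are reversed: an $M$-edge lying inside one of the parts would violate the clique property, and connectivity of $C$ propagates the choice across all of $C$. Conversely, any assignment of an "orientation" to each component produces two cliques of $G$, and the pair is a genuine decomposition exactly when both parts have size $v$; writing $a_i,b_i$ for the numbers of $X$- and $Y$-vertices in the $i$-th component and $d_i = a_i - b_i$, this condition says that the $d_i$ over the "flipped" components sum to $0$. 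So $G$ is weakly $(2,v)$-clique-partitioned if and only if the only subsets $S$ of the components with $\sum_{i\in S} d_i = 0$ are $S=\emptyset$ and $S$ = everything (which recover $\{X,Y\}$).

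The second step is a counting argument on the components $C_1,\dots,C_k$ of $M$. Each connected component carries at least $a_i+b_i-1$ edges, so $v = |E(M)| \ge \sum_i (a_i+b_i-1) = 2v-k$, giving $k\ge v$ with equality precisely when every component is a tree. No $d_i$ can be zero, since a single such component would be a forbidden zero-sum subset; and $M$ cannot have an isolated vertex in $X$ \emph{and} one in $Y$, since that pair would be forbidden. Hence, after possibly exchanging $X$ and $Y$, all isolated vertices lie in $X$. Then every vertex of $Y$ lies in a component with an edge, of which there are at most $v$; combined with $k\ge v$ this forces $k=v$, so all components are trees, and counting $\sum_i a_i = v$ over exactly $v$ components shows that every component which is not an isolated $X$-vertex has exactly one $X$-vertex and is therefore a star $K_{1,b_i}$ with $b_i\ge 2$. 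Writing $p$ for the number of isolated $X$-vertices, one obtains the identity $p = \sum_i (b_i - 1)$, the sum over the star components. If there were two or more star components, then taking one of them together with $b_i-1$ of the isolated $X$-vertices (there are enough, by the identity) yields a zero-sum subset that is neither empty nor everything, a contradiction; so there is exactly one star component, necessarily $K_{1,v}$, whence $M = K_{1,v}$ and $G\cong\Gamma'(2,v)$.

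The only genuinely delicate point is this last exclusion of two or more non-trivial components; everything else is bookkeeping with the parameters $a_i,b_i$. The identity $p=\sum_i(b_i-1)$ relating the number of isolated $X$-vertices to the sizes of the star components is what makes that argument go through, and it should be recorded explicitly. It is also worth a quick check at the small-parameter boundary (for instance $v=2$, or $k=2$) to confirm that the forbidden subsets produced along the way are indeed distinct from both $\emptyset$ and the full set.
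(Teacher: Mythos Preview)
Your argument is correct and takes a genuinely different route from the paper's. The paper argues directly: first it shows that one of the two cliques, say $Y$, has every vertex with exactly $v-1$ neighbours in $X$ (otherwise a single vertex swap gives a new decomposition), and then it shows that these $v-1$ neighbours must be the \emph{same} for every $y\in Y$ by a second swap argument. Your approach instead passes to the bipartite ``non-edge'' graph $M$, observes that any $v$-clique decomposition of $G$ must split each connected component of $M$ according to its bipartition, and so reduces uniqueness to a zero-sum condition on the signatures $d_i=a_i-b_i$. This is more structural and arguably more illuminating (the component-flip picture makes the source of alternative decompositions completely explicit), at the cost of being longer; the paper's proof is a two-line swap trick.

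One expository wobble to fix: the sentence ``every vertex of $Y$ lies in a component with an edge, of which there are at most $v$; combined with $k\ge v$ this forces $k=v$'' does not actually yield $k\le v$. Bounding the number of \emph{edge-containing} components by $v$ only gives $q\le v$, and $k=p+q\ge v$ says nothing. The clean argument is the one you implicitly use a line later: once isolated $Y$-vertices are excluded, \emph{every} component has $a_i\ge 1$, so $k\le\sum_i a_i=v$; together with $k\ge v$ this gives $k=v$ and simultaneously forces $a_i=1$ for all $i$. State it that way and the rest goes through exactly as you wrote. Your final exclusion of two or more star components via the identity $p=\sum_i(b_i-1)$ is fine: with at least two stars one has $p\ge b_j-1$ for any chosen star $j$, so the zero-sum subset $\{j\}\cup(\text{$b_j-1$ isolated $X$-vertices})$ exists and is proper.
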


\begin{proof}
$G$ consists of two $v$-cliques $X$ and $Y$ with $v(v-1)$ edges between them; in other words it is the complete graph $K_{2v}$ with $v$ edges missing between the two cliques.

We show first that in one of the two cliques, each vertex is adjacent to precisely $v-1$ vertices in the other clique. For if not, then there are vertices $x\in X$ and $y\in Y$ each adjacent to all vertices in the other clique; then $(X\setminus\{x\})\cup\{y\}$ and its complement form a new $v$-clique partition of $G$. Without loss of generality, we may assume all vertices of $Y$ are adjacent to precisely $v-1$ vertices of $X$. We now show that each vertex of $Y$ must be adjacent to the \emph{same} $v-1$ vertices of $X$.

Let $x\in X$ be a vertex not adjacent to all vertices in $Y$. Let $M$ be the set of non-neighbours of $x$ in $Y$, and let $m=|M|$. Since only $v$ edges are missing between $X$ and $Y$, there are at least $m-1$ vertices in $X$ adjacent to all vertices in $Y$. Let $S$ be a set of $m-1$ such vertices. Then $(Y\setminus M)\cup\{x\}\cup S$ and its complement are a $v$-clique decomposition of $G$. This decomposition is distinct from $X$ and $Y$ unless $m=v$.

Thus there is one vertex in $X$ not adjacent to any vertex in $Y$, and this accounts for all $v$ missing edges between $X$ and $Y$. Thus $G$ is isomorphic to $\Gamma'(2,v)$.
\end{proof}

It is immediate that if $G$ is a weakly $(n,v)$-clique-partitioned graph, then any $m$ of the $n$ $v$-cliques in the unique decomposition of $G$ ($1\leq m\leq n$) induce a weakly $(m,v)$-clique-partitioned subgraph of $G$. The above discussion then shows that if $X$ and $Y$ are two of the $v$-cliques in a maximal graph, then exactly one of the following situations must occur.
\begin{enumerate}[label=(\alph*),topsep=-1ex,itemsep=-1ex]
	\item There is a vertex $x\in X$ not adjacent to any vertex in $Y$;\newline
	or
	\item There is a vertex $y\in Y$ not adjacent to any vertex in $X$.
\end{enumerate}
In the first situation we write $X\to Y$; otherwise $Y\to X$. Where necessary, we will indicate the distinguished vertex not adjacent to the other clique by the notation $X^{(x)}\to Y$ or $Y^{(y)}\to X$ as appropriate. Our first result towards the extension to $n\geq 3$ shows that in fact these distinguished vertices must be unique within a clique.

\begin{lemma}\label{lem:distinguished}
Let $n\geq 3$ and let $v\geq 2$. Let $G$ be a maximal weakly $(n,v)$-clique-partitioned graph, and let $X=\{x_1,\ldots,x_v\}$, $Y=\{y_1,\ldots,y_v\}$ and $Z=\{z_1,\ldots,z_v\}$ be distinct cliques in the decomposition of $G$. If $X^{(x_i)}\to Y$ and $X^{(x_j)}\to Z$ then $i=j$.
\end{lemma}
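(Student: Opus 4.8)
The plan is to argue by contradiction: suppose $X^{(x_i)}\to Y$ and $X^{(x_j)}\to Z$ with $i\neq j$, and exhibit a second decomposition of $G$ into $n$ vertex-disjoint $v$-cliques. The key observation is that since $x_i$ is adjacent to no vertex of $Y$, all $v$ of the missing edges between $X$ and $Y$ are precisely the edges $x_i y_k$, $1\le k\le v$; in particular every vertex of $X\setminus\{x_i\}$ is adjacent to every vertex of $Y$, and symmetrically every vertex of $X\setminus\{x_j\}$ is adjacent to every vertex of $Z$. Since $i\neq j$, the vertex $x_j$ is adjacent to all of $Y$ and the vertex $x_i$ is adjacent to all of $Z$.

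First I would form the candidate new cliques by swapping $x_i$ and $x_j$ between their natural partners: let $Y' = (Y\setminus\{y_1\})\cup\{x_j\}$ for some fixed $y_1\in Y$, and let $Z' = (Z\setminus\{z_1\})\cup\{x_i\}$ for some fixed $z_1\in Z$, and let $X' = (X\setminus\{x_i,x_j\})\cup\{y_1,z_1\}$. I would then check that each of these three sets is a $v$-clique in $G$: $Y'$ is a clique because $Y\setminus\{y_1\}$ is a clique and $x_j$ is adjacent to all of $Y$ (hence to all of $Y\setminus\{y_1\}$); similarly for $Z'$ using that $x_i$ is adjacent to all of $Z$; and $X'$ is a clique because $X\setminus\{x_i,x_j\}$ is a clique, $y_1$ is adjacent to everything in $X\setminus\{x_i\}$ (as all missing $X$–$Y$ edges involve $x_i$) so in particular to $X\setminus\{x_i,x_j\}$ and to $z_1$ — wait, I must also verify $y_1\sim z_1$. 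This requires a little care: if $Y\to Z$ the missing $Y$–$Z$ edges are those at some distinguished vertex, and I would simply choose $y_1,z_1$ to avoid that vertex (possible since $v\ge 2$ gives at least two choices in each clique; and if $n=v=2$ the hypothesis $n\ge 3$ excludes it anyway). So the free choice of $y_1$ and $z_1$ is exactly what makes $X'$ a clique.

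Having produced $X',Y',Z'$, the new partition is $\{X',Y',Z'\}$ together with the remaining $n-3$ cliques of the original decomposition, which are untouched. These $n$ sets are pairwise disjoint and cover $V(G)$, each is a $v$-clique, and the partition differs from the original (e.g. $X'\neq X$ since $x_i\notin X'$), contradicting the uniqueness in the definition of weakly $(n,v)$-clique-partitioned. Hence $i=j$.

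The main obstacle is the verification that $X'$ is genuinely a clique — specifically the edge $y_1\sim z_1$ — since the adjacency between $Y$ and $Z$ is not controlled by the hypotheses about $X$. The fix is the observation above that one can choose $y_1\in Y$ and $z_1\in Z$ freely, and since at most one vertex of $Y$ (resp.\ $Z$) is ``bad'' with respect to the $Y$–$Z$ direction, and $v\ge 2$, a good pair exists. A secondary routine point is confirming that all missing $X$–$Y$ edges are incident with $x_i$: this is forced because the maximality of $G$ means exactly $v$ edges are absent between each pair of cliques in the decomposition (from the proof of Lemma~\ref{lem:p2}), and the $v$ edges from $x_i$ to $Y$ already account for all of them.
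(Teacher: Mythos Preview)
Your argument is correct. You construct an alternative partition by replacing $X,Y,Z$ with $X'=(X\setminus\{x_i,x_j\})\cup\{y_1,z_1\}$, $Y'=(Y\setminus\{y_1\})\cup\{x_j\}$, $Z'=(Z\setminus\{z_1\})\cup\{x_i\}$, and you correctly identify that the only non-automatic edge is $y_1\sim z_1$, which you secure by choosing $y_1$ (or $z_1$) away from the distinguished vertex of the $Y$--$Z$ arrow; since $v\ge 2$ this choice is always available.

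The paper's proof reaches the same contradiction via a slightly cleaner swap. Assuming without loss of generality $i=1$, $j=2$, and $Y^{(y_1)}\to Z$, it performs a \emph{cyclic} exchange: $\{x_1,z_2,\ldots,z_v\}$, $\{z_1,y_2,\ldots,y_v\}$, $\{y_1,x_2,\ldots,x_v\}$. Here each new clique involves inserting exactly one foreign vertex, and every required adjacency is forced directly by one of the three arrows $X^{(x_1)}\to Y$, $X^{(x_2)}\to Z$, $Y^{(y_1)}\to Z$ --- no free choice of $y_1$ or $z_1$ is needed, and no cross-edge like your $y_1\sim z_1$ has to be checked separately. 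So the paper's route is marginally more economical, but your double-swap is equally valid and the extra step you introduce is handled correctly.
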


\begin{proof}
The subgraph of $S$ of $G$ induced by $X\cup Y\cup Z$ is weakly $(3,v)$-clique partitioned. Suppose $i\neq j$. Up to isomorphism, we may assume $i=1$, $j=2$ and $Y^{(y_1)}\to Z$. Then the sets $\{x_1,z_2,\ldots,z_v\}$, $\{z_1,y_2,\ldots,y_v\}$ and $\{y_1,x_2,\ldots,x_v\}$ form another $v$-clique partition of $S$, a contradiction.
\end{proof}

In light of Lemma~\ref{lem:distinguished} we may drop the superscript notation and assume the vertices are numbered such that if $X\to Y$, then the distinguished vertex in $X$ is $x_1$. 

If $X$ and $Y$ are $v$-cliques in the decomposition of $G$, then either $X\to Y$ or $Y\to X$. The cliques are therefore arranged in a tournament. The next step is to show that this tournament is acyclic.

\begin{lemma}\label{lem:acyclic}
Let $n\geq 3$ and let $v\geq 2$. Let $G$ be a maximal weakly $(n,v)$-clique-partitioned graph, and let $X=\{x_1,\ldots,x_v\}$, $Y=\{y_1,\ldots,y_v\}$ and $Z=\{z_1,\ldots,z_v\}$ be distinct cliques in the decomposition of $v$. If $X\to Y$ and $Y\to Z$ then $X\to Z$.
\end{lemma}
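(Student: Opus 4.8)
The plan is to argue by contradiction. By the discussion preceding this lemma the cliques $X,Y,Z$ form a tournament, so the failure of $X\to Z$ would force $Z\to X$; assuming $X\to Y$, $Y\to Z$ and $Z\to X$, I will produce a second partition of the subgraph $S$ of $G$ induced by $X\cup Y\cup Z$ into three $v$-cliques. Since $S$ is weakly $(3,v)$-clique-partitioned with unique decomposition $\{X,Y,Z\}$ (being induced by three of the cliques of the decomposition of $G$), and any new partition of $S$ together with the remaining $n-3$ cliques is a new partition of $G$, this is a contradiction.

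The first step is to record which edges between these cliques are present. By the swap argument used in the proof of Lemma~\ref{lem:p2}, in any weakly clique-partitioned graph at least $v$ edges are missing between each pair of cliques of the decomposition; as $G$ is maximal, the total number of missing edges is $n(n-1)v/2=v\binom{n}{2}$, so exactly $v$ edges are missing between every pair. (Equivalently, by Theorem~\ref{thm:wcpn2} the subgraph induced by any two of the cliques is isomorphic to $\Gamma'(2,v)$.) Hence whenever $P\to Q$, the distinguished vertex of $P$ is non-adjacent to all $v$ vertices of $Q$, and this already accounts for all $v$ missing $P$–$Q$ edges, so every other vertex of $P$ is adjacent to every vertex of $Q$. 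Writing $x_1,y_1,z_1$ for the distinguished vertices of $X\to Y$, $Y\to Z$, $Z\to X$ respectively (a consistent labelling, since each of $X,Y,Z$ is the source of exactly one of these relations), this gives in particular $y_1\sim x_i$ for all $i\ge 2$, $z_1\sim y_j$ for all $j\ge 2$, and $x_1\sim z_k$ for all $k\ge 2$.

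The second step is to check that
\[
\{x_1\}\cup(Z\setminus\{z_1\}),\qquad \{y_1\}\cup(X\setminus\{x_1\}),\qquad \{z_1\}\cup(Y\setminus\{y_1\})
\]
is the desired rival partition. Each set is a $v$-clique, because its $(v-1)$-element ``tail'' is a subclique of $Z$, $X$ or $Y$ and the inserted vertex is joined to every vertex of that tail by the three adjacencies just listed. The three sets are pairwise disjoint with union $X\cup Y\cup Z$, and the partition they form is distinct from $\{X,Y,Z\}$ because, for instance, $\{x_1\}\cup(Z\setminus\{z_1\})$ meets two of the original cliques (here $v\ge 2$ is used) and so equals none of them. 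This contradicts the uniqueness of the decomposition of $S$, hence of $G$; therefore $Z\to X$ is impossible and $X\to Z$ holds.

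The proof is in essence bookkeeping, and the one step needing care is the first: maximality must be used to deduce that every pair of decomposition cliques is missing exactly $v$ edges, which is precisely what pins down the cross-adjacency pattern and makes the distinguished vertices behave as required. After that, the only idea is the cyclic rotation sending $x_1$ into the $Z$-clique, $y_1$ into the $X$-clique and $z_1$ into the $Y$-clique, and verifying that the three resulting sets are cliques is immediate.
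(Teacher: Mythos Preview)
Your proof is correct and follows essentially the same approach as the paper: assume $Z\to X$ and exhibit the rival decomposition $\{x_1,z_2,\ldots,z_v\}$, $\{y_1,x_2,\ldots,x_v\}$, $\{z_1,y_2,\ldots,y_v\}$. The paper states this in one line, relying on the conventions set up before the lemma; you spell out explicitly why exactly $v$ edges are missing between each pair of cliques and why the labelling of distinguished vertices is consistent, which is helpful but not a different argument.
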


\begin{proof}
If $Z\to X$, then the sets $\{x_1,z_2,\ldots,z_v\}$, $\{y_1,x_2,\ldots,x_v\}$ and $\{z_1,y_2,\ldots,y_v\}$ form a $v$-clique decomposition of $G$.
\end{proof}

We are now ready to state the uniqueness result.

\begin{theorem}
Let $n\geq 2$ and let $v\geq 2$. Let $G$ be a maximal weakly $(n,v)$-clique-partitioned graph. Then $G$ is isomorphic to the graph $\Gamma'(n,v)$.
\end{theorem}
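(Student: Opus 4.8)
The plan is to reduce the general case to the two structural lemmas already established, together with a global edge count; no genuine induction is needed, and the $n=2$ instance is exactly Theorem~\ref{thm:wcpn2}, to which the argument below degenerates.

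\textbf{Step 1: every pair of cliques misses exactly $v$ edges.} I would first observe that the argument in the proof of Lemma~\ref{lem:p2} is in fact local: if some pair $X,Y$ of cliques in the decomposition had fewer than $v$ edges missing between them, then some $x\in X$ is adjacent to all of $Y$ and some $y\in Y$ to all of $X$, producing the alternative decomposition $(X\setminus\{x\})\cup\{y\}$, $(Y\setminus\{y\})\cup\{x\}$ together with the untouched cliques, contradicting uniqueness. Hence every one of the $\binom{n}{2}$ pairs misses at least $v$ edges; since $G$ is maximal, exactly $n(n-1)v/2$ edges are missing in total, so every pair misses exactly $v$ edges.

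\textbf{Step 2: order the cliques.} For each unordered pair of cliques exactly one of $X\to Y$, $Y\to X$ holds, so the $n$ cliques carry a tournament structure. By Lemma~\ref{lem:acyclic} this tournament is acyclic, and by Lemma~\ref{lem:tournament} it is the unique transitive tournament on $n$ vertices; hence I may relabel the cliques $C_0,\dots,C_{n-1}$ so that $C_i\to C_j$ if and only if $i<j$. By Lemma~\ref{lem:distinguished} the distinguished vertex of $C_i$ witnessing $C_i\to C_j$ does not depend on $j$; call this vertex $(i,0)$ and label the remaining vertices of $C_i$ arbitrarily as $(i,1),\dots,(i,v-1)$. (For $n=2$ there is a single arc and this step is vacuous, matching Theorem~\ref{thm:wcpn2}.)

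\textbf{Step 3: identify the missing edges.} For $i<j$ the vertex $(i,0)$ is non-adjacent to all $v$ vertices of $C_j$, which already accounts for $v$ missing edges between $C_i$ and $C_j$; by Step 1 these are all of them. Summing over all $i<j$, the set of non-edges of $G$ is exactly $\{\,(i,0)(k,\ell): 0\le i<k\le n-1,\ 0\le\ell\le v-1\,\}$, of size $v\binom{n}{2}=n(n-1)v/2$, consistent with maximality. This is precisely the edge set removed from $K_{nv}$ in the definition of $\Gamma'(n,v)$, so $G\cong\Gamma'(n,v)$, completing the proof (and, with Hetyei's result for $v=2$, generalising it).

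\textbf{Expected main obstacle.} The delicate point is making the family of distinguished vertices globally coherent — a priori the vertex of $C_i$ not joined to $C_j$ could vary with $j$, and different pairs could "share" or "overlap" their missing edges in a way that defeats the count. Both issues are exactly what Lemma~\ref{lem:distinguished} and the tightness in Step 1 rule out, so once those are in hand the argument is a short bookkeeping exercise.
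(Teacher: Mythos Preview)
Your argument is correct and follows the same route as the paper: the paper's own proof is a one-line appeal to Theorem~\ref{thm:wcpn2}, Lemmas~\ref{lem:distinguished} and~\ref{lem:acyclic}, and the uniqueness of the acyclic tournament (Lemma~\ref{lem:tournament}), and you have simply unpacked how these pieces fit together. Your Step~1 is a useful addition, since it makes explicit the tightness argument (each pair of cliques misses \emph{exactly} $v$ edges) that the paper relies on implicitly to ensure the subgraph on any two cliques is itself maximal, so that Theorem~\ref{thm:wcpn2} and the $X\to Y$ notation apply.
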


\begin{proof}
The result follows from Theorem~\ref{thm:wcpn2}, Lemmas~\ref{lem:distinguished} and~\ref{lem:acyclic} and uniqueness of the acyclic tournament (Lemma~\ref{lem:tournament}).
\end{proof}

The graphs $\Gamma'(n,v)$ are of course far from regular. However, their structure is sufficiently tightly defined to be able to compute their automorphism group.

\begin{proposition}
Let $n\geq 2$ and let $v\geq 2$. Then $\Aut(\Gamma'(n,v))$ has order $v!\left((v-1)!\right)^{n-1}$ and is isomorphic to $S_v\times S_{v-1}^{n-1}$.
\end{proposition}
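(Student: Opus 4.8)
The plan is to prove both parts at once by showing that every automorphism of $\Gamma'(n,v)$ must fix each clique of the (unique) decomposition setwise, and then to determine exactly which permutations inside the cliques extend to automorphisms.

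\textbf{Step 1: automorphisms fix each decomposition clique.} Since $\Gamma'(n,v)$ is weakly $(n,v)$-clique-partitioned (Theorem~\ref{thm:p2}), its decomposition into the cliques $C_i=\{(i,j):0\le j\le v-1\}$ is unique, so any $\phi\in\Aut(\Gamma'(n,v))$ permutes the set $\{C_0,\ldots,C_{n-1}\}$. Reading the construction directly, for $i<k$ the vertex $(i,0)$ has no neighbour in $C_k$, whereas every vertex of $C_k$ has a neighbour in $C_i$ (e.g.\ $(i,1)$); hence the relation ``some vertex of $X$ has no neighbour in $Y$'' is exactly the strict linear order $C_0\prec C_1\prec\cdots\prec C_{n-1}$ on the decomposition cliques (equivalently, the transitive acyclic tournament of Lemmas~\ref{lem:acyclic} and~\ref{lem:tournament}). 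This relation is preserved by $\phi$, and a linear order has no nontrivial order-automorphism, so $\phi(C_i)=C_i$ for all $i$. Moreover, for $i\le n-2$, $(i,0)$ is the unique vertex of $C_i$ with no neighbour in $C_{i+1}$ (every other vertex of $C_i$ is adjacent to all of $C_{i+1}$), so $\phi$ also fixes $(i,0)$ for each $i\le n-2$.

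\textbf{Step 2: the twin-swaps are automorphisms.} A short inspection of the adjacency rule shows that, for each fixed $i\le n-2$, the vertices $(i,1),(i,2),\ldots,(i,v-1)$ all have the same closed neighbourhood, and that the $v$ vertices of $C_{n-1}$ also all share a closed neighbourhood. Therefore every permutation of the vertex set that permutes $\{(i,1),\ldots,(i,v-1)\}$ within itself for each $i\le n-2$, permutes $C_{n-1}$ arbitrarily, and fixes $(0,0),\ldots,(n-2,0)$, is an automorphism. These permutations act on pairwise-disjoint vertex sets, so together they generate a subgroup isomorphic to $S_v\times S_{v-1}^{\,n-1}$, of order $v!\bigl((v-1)!\bigr)^{n-1}$.

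\textbf{Step 3: there is nothing more, and conclusion.} By Step 1 an arbitrary $\phi\in\Aut(\Gamma'(n,v))$ fixes each $C_i$ setwise and fixes $(i,0)$ for $i\le n-2$; hence its restriction to $C_i$ ($i\le n-2$) is a permutation of $\{(i,1),\ldots,(i,v-1)\}$ and its restriction to $C_{n-1}$ is an arbitrary permutation of $C_{n-1}$. Thus $\phi$ is one of the maps of Step~2, and $\Aut(\Gamma'(n,v))\cong S_v\times S_{v-1}^{\,n-1}$ of the stated order. The point needing the most care is the verification in Step~2 that a twin-swap really preserves adjacency between two distinct cliques $C_i,C_k$ with $i<k$: this reduces to the observation that adjacency across this pair depends only on whether the $C_i$-endpoint is the distinguished vertex $(i,0)$, and the permutations in question never move $(i,0)$ for $i\le n-2$ (the only indices that can be the smaller of two clique indices). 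The other place to stay attentive is the asymmetry of $C_{n-1}$: because no clique lies above it, \emph{all} $v$ of its vertices are mutually twin rather than only $v-1$ of them, which is exactly what produces the single factor $S_v$ alongside the $n-1$ factors $S_{v-1}$.
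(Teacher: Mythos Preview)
Your proof is correct and follows essentially the same outline as the paper's: show each clique is fixed setwise, show the distinguished vertices $(i,0)$ are fixed, and show the remaining vertices in each clique can be permuted freely. The only notable difference is the invariant used in Step~1: the paper observes that the distinguished vertices have pairwise distinct valencies $i(v-1)$ (and non-distinguished vertices have valency $nv-i$), which immediately forces every automorphism to fix each clique and each distinguished vertex; you instead use the tournament relation ``some vertex of $X$ has no neighbour in $Y$'' and the rigidity of a linear order. Both arguments are short, but your twin-vertex formulation in Step~2 makes the verification that the candidate permutations really are automorphisms more explicit than the paper's single sentence.
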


\begin{proof}
If $X_1,X_2,\ldots,X_n$ are the $n$ $v$-cliques in the composition of $\Gamma'(n,v)$, then by a suitable labelling they form a chain $X_1\to X_2\to\cdots\to X_n$. The distinguished vertices in $X_i$, $1\leq i\leq n-1$ have valency $i(v-1)$. Non-distinguished vertices in $X_i$, $1\leq i\leq n-1$ have valency $i(v-1)+(n-i)v=nv-i$. Any automorphism must clearly preserve the unique clique decomposition, but no cliques can be exchanged since the distinguished vertices all have different valency. The distinguished vertices are fixed by any automorphism, but the non-distinguished vertices within any clique may be permuted freely. Clique $X_n$ has no distinguished vertex. The result follows.
\end{proof}

\section{Further links to extremal graph theory}\label{sec:links}
Recall the following which is well-known. An $n$-vertex graph which does not contain any $(r+1)$-vertex clique may be constructed by partitioning the set of vertices into $r$ parts of equal or nearly equal size, and connecting two vertices by an edge whenever they belong to different parts. This is the Tur\'an graph $T(n,r)$. By ``nearly equal size'' is meant that the cardinality of any two parts differs by at most one. This graph has the largest number of edges among all $K_{r+1}$-free $n$-vertex graphs~\cite{Turan1941,Turan1954}. Putting $r=v$ and replacing $n$ by $nv$, the graph $T(nv,v)$ is regular of valency $n(v-1)$ and has $n^2 v(v-1)/2$ edges. This is precisely the same number of edges as $\Gamma'(n,v)$, the unique maximal weakly $(n,v)$-clique-partitioned graph on this parameter set. As observed in the previous section, the graphs $\Gamma'(n,v)$ are not regular and arise in a different manner to the graphs $T(nv,v)$. But it is not without some interest that there exist two distinct families of unique graphs on the same number of vertices with completely different properties.

Probably of more interest though in this context are maximal strongly $(n,v)$-clique-partitioned graphs. Clearly these graphs contain no $(v+1)$-clique. Let $G$ be a maximal strongly $(n,v)$-clique-partitioned graph and denote the number of edges by $|G|$. Then $|G|=nv(v-1)/2+nv(n-1)(v-2)/2=nv(nv-2n+1)/2$. So $|G|/|T(nv,v)|=(nv-2n+1)/n(v-1)=1-(n-1)/n(v-1)\to 1$ as $v\to\infty$. Thus maximal strongly $(n,v)$-clique-partitioned graphs form a family of graphs avoiding $(v+1)$-cliques, the number of edges of which approach asymptotically the number of edges in the extremal graphs $T(nv,v)$.

Finally, we return to the relationship between our questions and colouring problems. It is shown in \cite[Theorem~3.3]{Klotz2016} that the circulant graph $\Cay(\Z_{nv},\{\pm 1,\pm 2,\ldots,\pm(n-1)\})$ is an edge-minimal uniquely $n$-colourable graph of order $nv$. Since by \cite[Theorem~2.2]{Klotz2016} the colour classes of such a Cayley graph must be cosets of some subgroup of order $v$ in $\Z_{nv}$, it follows that this circulant graph is unique. Therefore its complement is the unique maximal strongly $(n,v)$-clique partitioned circulant graph. This is in agreement with our Proposition~\ref{prop:circulant}. 

\section*{Acknowledgements}
The third author acknowledges support from the APVV Research Grants 17-0428 and 19-0308, and the VEGA Research Grants 1/0206/20 and 1/0567/22.

We thank the anonymous referees for their helpful comments, especially in relation to the links between our work and complementary work on colourings.


\end{document}